\newtheorem{thm}{Theorem}[section]
\newtheorem{cor}[thm]{Corollary}
\newtheorem{lem}[thm]{Lemma}
\newtheorem{prop}[thm]{Proposition}
\newtheorem{defn}[thm]{Definition}
\newtheorem{rem}[thm]{Remark}
\numberwithin{equation}{section}
\def\Om{\Omega}
\def\R{\mathbb{R}}
\def\Lqx{L^{q(\cdot)}(\Om)}
\def\Wpx{W^{1,p(\cdot)}_0(\Om)}
\def\Wp{W^{1,p}_0(\Om)}
\def\Wqx{W^{1,q(\cdot)}_0(\Om)}
\def\W11{W^{1,1}_0(\Om)}
\def\convex{{\mathcal K}_\psi \cap L^{\infty} (\Om)}
\def\convexn{{\mathcal K}_{\psi_n} \cap L^{\infty} (\Om)}
\def\entropycond{{\rm(}1.5{\rm})_{f,\psi}}
\def\entropycondn{{\rm(}1.5{\rm})_{f_n,\psi_n}}
\def\entropycondfn{{\rm(}1.5{\rm})_{f_n,\psi}}
\def\entropycondfin{{\rm(}1.5{\rm})_{f_n^i,\psi}}
\def\build#1_#2^#3{\mathrel{\mathop{\kern 0pt#1}\limits_{#2}^{#3}}}
\begin{document}

\title{The obstacle problem for nonlinear elliptic equations with variable growth
and $L^1-$data\footnote{accepted for publication in
\textsf{Monatsh.~Math.} The original article will be available at
\texttt{www.springerlink.com}}}

\author{
{\sc Jos\'e Francisco Rodrigues}\\
{\sc\footnotesize CMUC, Department of Mathematics, University of
Coimbra}\\
{\sc\footnotesize and FCUL/Universidade de Lisboa}\\
{\sc \footnotesize Av. Prof. Gama Pinto 2, 1649--003 Lisboa, Portugal}\\
{\tt\footnotesize rodrigue@ptmat.fc.ul.pt}
\and
\sc Manel Sanch\'on\footnote{Current address: Universitat de
Barcelona, Departament de Matem\`atica Aplicada i An\`alisi, Gran
Via 585, 08007 Barcelona, Spain. \textit{E-mail}: {\tt msanchon@maia.ub.es}}\\
{\sc\footnotesize CMUC, Department of Mathematics, University of Coimbra}\\
{\sc \footnotesize 3001--454 Coimbra, Portugal}
\and
\sc Jos\'e Miguel Urbano\\
{\sc\footnotesize CMUC, Department of Mathematics, University of Coimbra}\\
{\sc \footnotesize 3001--454 Coimbra, Portugal}\\
{\tt\footnotesize jmurb@mat.uc.pt}
}

\date{}

\maketitle

\begin{abstract}
The aim of this paper is twofold: to prove, for $L^1-$data, the
existence and uniqueness of an entropy solution to the obstacle
problem for nonlinear elliptic equations with  variable growth, and
to show some convergence and stability properties of the
corresponding coincidence set. The latter follow from extending the
Lewy--Stampacchia inequalities to the general framework of $L^1$.

\bigskip

\noindent {\scriptsize {\sc Mathematics Subject Classification
(2000):} 35J85; 35J70; 35B30; 35R35}

\noindent {\scriptsize {\sc Key words:} Obstacle problem; variable
growth; entropy solutions; $L^1-$data; Lewy--Stampacchia
inequalities; stability}

\end{abstract}

\section{Introduction}\label{section1}

Let $\Om \subset \R^N$, $N \geq 1$, be a bounded domain. The purpose
of this paper is the study of the obstacle problem associated with
nonlinear elliptic equations with data $f \in L^1(\Om)$ and
principal part modeled on the $p(\cdot)-$Laplacian with variable
exponent
$$\Delta_{p(x)} u := {\rm div}\  |\nabla u|^{p(x)-2} \nabla u .$$
These obstacle problems fall into the framework of the model problem
\begin{equation}
\left\{
\begin{array}{rclll}
-\Delta_{p(\cdot)} u + \beta (\cdot,u) &= & f & \ \textrm{in} & \Om, \\
u & = & 0  & \ \textrm{on} & \partial \Om,
\end{array}\right.
\label{mp}
\end{equation}
for a certain function $\beta$, related to a maximal monotone graph.
For instance, in the case of the zero obstacle problem, when $u\geq
0\, $ a.e. in $\Om$, it can be shown that $\beta$ is a.e. given by
the nonlinear discontinuity
\begin{equation}
\beta(x,u)= \left\{
\begin{array}{ccl}
0 & \ \textrm{if} & \ u(x)>0,\\
-f^-(x) & \ \textrm{if} & \ u(x)=0,
\end{array} \right.
\label{beta}
\end{equation}
where $f^-$ is the negative part of the decomposition $f=f^+-f^-$.
Problems of the type \eqref{mp} have been solved by Br\'ezis and
Strauss \cite{BS} for linear elliptic operators ($p(\cdot) \equiv
2$) and general maximal monotone graphs $\beta$. An $L^1-$theory for
the case of $p-$Laplacian type operators (with $p$ constant) has
been proposed in \cite{BBGGPV95} by B\'enilan \textit{et als.},
\textit{via} the introduction of the notion of entropy solution.
This framework has been extended to unilateral problems with
constant $p$ in \cite{BG}, \cite{BC}, and \cite{P}. The interesting
cases are those of $1<p\leq N$, since for $p>N$ the variational
methods of Leray--Lions (see, for instance, \cite{L69}) easily
apply, the solution being bounded and with gradient in $L^p(\Om)$.
Recently, the obstacle problem with more general data, namely with
$f$ only a measure, has been considered by several authors (see,
\textit{e.g.}, \cite{DADM,L1,L2,BP}). In particular, Br\'ezis and
Ponce show in \cite{BP}, still in the case $p=2$ and for a constant
obstacle, that $f^- \in L^1(\Om) + H^{-1} (\Om)$ is a necessary and
sufficient condition for the existence (and the uniqueness) of a
solution to \eqref{mp}.

On the other hand, for the case of a variable exponent, the
existence and uniqueness of an entropy solution to \eqref{mp}, with
$\beta \equiv 0$ and $f \in L^1$, has been recently obtained by two
of the authors in \cite{SU06}. The result builds upon
\cite{BBGGPV95} and \cite{ABFOT03}, assumes the exponent to be
log-H\"{o}lder
continuous, and relies on \textit{a priori} estimates in
Marcinkiewicz spaces with variable exponent. A primary aim of this
paper is to extend this theory to obstacle problems ($u\geq \psi$ in
$\Om$), for admissible general obstacles $\psi=\psi(x)$ and
nonlinear operators with variable growth.

The natural framework to solve problem \eqref{mp} is that of Sobolev
spaces with variable exponent. Recent applications in elasticity
\cite{Zh}, non--Newto\-nian fluid mechanics \cite{Zh2,Ruzicka00,AR},
or image processing \cite{CLR}, gave rise to a revival of the
interest in these spaces, the origins of which can be traced back to
the work of Orlicz in the 1930's. An account of recent advances,
some open problems, and an extensive list of references can be found
in the interesting surveys by Diening \textit{et als.} \cite{DHN}
and Antontsev \textit{et al.} \cite{AS} (\textit{cf.} also the work
of Kov\'{a}\v{c}ik and R\'{a}kosn\'{\i}k \cite{KR}, where many of
the basic properties of these spaces are established). A brief
introduction to the subject, which is pertinent to the present paper
can be found in \cite{SU06}; we will refer the reader to this paper,
when appropriate, to avoid an unnecessary duplication of arguments.

For quasilinear operators in divergence form of $p(\cdot)-$Laplacian
type
$${\mathcal A}u := - {\rm div}\ a(x,\nabla u),$$
the classical obstacle problem can be formulated, using the duality
between $W_0^{1,p(\cdot)} (\Om)$ and $W^{-1,p^\prime(\cdot)} (\Om)$,
in terms of the variational inequality
\begin{equation}
u \in {\cal K}_\psi \ : \ \int_\Om a(x,\nabla u) \cdot \nabla (v-u)
\, dx \geq \langle f,v-u \rangle \ , \quad \forall v \in {\cal
K}_\psi, \label{vi}
\end{equation}
whenever $f \in W^{-1,p^\prime(\cdot)} (\Om)$ and the convex subset
\begin{equation}
{\cal K}_\psi = \left\{ v \in W_0^{1,p(\cdot)} (\Om) \ : \ v \geq
\psi \ \: \textrm{a.e. in} \ \Om\right\} \label{compact}
\end{equation}
is nonempty. The former holds in the case $f \in L^1(\Om)$ and
$p(\cdot)>N$ (since then, by Sobolev's embedding, $W_0^{1,p(\cdot)}
(\Om) \subset L^\infty (\Om)$) or if $f \in L^{r(\cdot)} (\Om)$,
with $N/p(\cdot) < r(\cdot)$, for $1< p(\cdot) < N$. The theory of
monotone operators then applies to \eqref{vi} (see
\cite{L69,KinStam80}), with
$$\langle f,v-u \rangle = \int_\Om f(v-u) \: dx.$$
As in the case of a constant $p$, for $f \in L^1(\Om)$ and
$1<p(\cdot)<N$, both sides of inequality \eqref{vi} may have no
meaning, so we are led, following \cite{BC} (\textit{cf.} also
\cite{BG} and \cite{P}), to extend the formulation of the unilateral
problem by replacing $v-u$ by its truncation $T_t(u-v)$, for every
level $t>0$, where $T_t$ is defined by
$$T_t(s):=\max \left\{ -t,\min\{t,s\} \right\},\quad s\in \R.$$
The resulting notion of entropy solution for the obstacle problem is
made precise in the following definition.

\begin{defn} An entropy solution
of the obstacle problem for $\{f,\psi\}$ is a measurable function
$u$ such that $u\geq \psi$ a.e. in $\Omega$, and, for every $t>0$,
$T_t(u)\in \Wpx$ and
$$\int_\Om a(x,\nabla u)\cdot \nabla T_t(\varphi-u)\ dx\geq \int_\Om
f \: T_t(\varphi-u)\ dx , \eqno{{\rm(}1.5{\rm})_{f,\psi}}$$
\stepcounter{equation} for all $\varphi \in {\cal K}_\psi \cap
L^\infty(\Om) $. \label{defop}
\end{defn}

This entropic formulation is adequate since we are able to show the
existence and uniqueness of a solution. In general, entropic
solutions do not belong to ${\cal K}_\psi$, since they do not have
an integrable distributional gradient; if $1<p(\cdot) < 2-1/N$, they
may not even be $L^1-$functions. However, they belong to
$W_0^{1,1}(\Om)$ if $p(\cdot) > 2-1/N$.

The framework is also adequate in order to obtain the continuous
dependence of the solution with respect to variations of the
obstacle in $W^{1,p(\cdot)}(\Om)$ and of the nonhomogeneous term in
$L^1 (\Om)$, extending the results of \cite{Cirmi} concerning the
constant exponent case.

For constant $p$, and certain assumptions on $f$ and ${\mathcal
A}\psi$, implying that ${\mathcal A}u \in L^1(\Om)$, it has been
observed in \cite{Ro05} that the variational solution to \eqref{vi}
actually satisfies, a.e. in $\Om$, an equation with a nonlinear
discontinuity and, in particular, that
\begin{equation}
{\mathcal A}u = f \ , \quad \textrm{a.e. in} \ \ \{ u > \psi\},
\label{Au=f}
\end{equation}
where $\{ u > \psi\} = \Om \setminus \{ u = \psi\}$ is the
complement of the coincidence set $\{ u = \psi\} := \{ x \in \Om \ :
\ u(x) = \psi(x)\}$. In fact, in the free boundary domain
$\{u>\psi\}$, equation \eqref{Au=f} can be obtained as a consequence
of the well-known Lewy--Stampacchia inequalities
\begin{equation}\label{Lewy-Stam}
f\leq {\mathcal A}u\leq f+({\mathcal A}\psi-f)^+ \ , \quad
\textrm{a.e. in }\Omega.
\end{equation}
A second main result we obtain in this paper is the extension of
these assertions to the general framework of entropy solutions of
equations involving variable exponents. In particular, for the
obstacle problem with an admissible obstacle $\psi$ such that
$({\mathcal A}\psi-f)^+\in L^1(\Omega)$, we show, still in the
$L^1-$framework, that in \eqref{mp},
$$\beta(\cdot,u)=-({\mathcal A}
\psi-f)^+\chi_{\{u=\psi\}} \ , \quad \textrm{a.e. in }\Omega,$$
where $\chi_{S}$ denotes the characteristic function of the set $S$.
In the special case $\psi\equiv 0$, we obtain \eqref{beta}.

An important consequence of inequalities \eqref{Lewy-Stam} is the
reduction of the regularity issue for the solutions of the obstacle
problem to that of the solutions of the corresponding equations. In
particular, we conclude that the boundedness of $f$ and $({\mathcal
A}\psi-f)^+$ are sufficient to guarantee the local H\"older
continuity of the solution and its gradient for the
$p(\cdot)-$obstacle problem, in accordance with the case of
equations (see \cite{Al} and \cite{FZ}) or that of functionals with
non--standard growth conditions (\cite{AM01}).

We also extend, for a fixed admissible obstacle $\psi$, the
$L^1-$contraction property of Br\'ezis and Strauss \cite{BS} for the
map $f\longmapsto\beta_f$. The property was obtained by one of the
authors for quasilinear obstacle problems (see \cite{Ro87,Ro05}),
with the aim of estimating the stability of two coincidence sets
$\{u_1=\psi\}$ and $\{u_2=\psi\}$ with respect to the $L^1-$norm of
the difference $f_1-f_2$ of the corresponding variational data. The
extension of these results to entropy solutions, in the context of
data merely in $L^1$, places the stability theory of the coincidence
sets (with respect to the variation of non-degenerate data) in its
natural and more general framework.

The paper is organized as follows. In section $2$, we introduce the
assumptions and state the main results. In section \ref{section2},
we prove \textit{a priori} estimates for an entropy solution of the
obstacle problem. Section~\ref{section3} deals with the existence
and uniqueness of an entropy solution and its continuous dependence
with respect to the data. In section \ref{section4}, we extend
Lewy--Stampacchia inequalities to the context of entropy solutions
and analyze their consequences, namely the characterization of the
obstacle problem in $L^1$ in terms of an equation with a nonlinear
discontinuity, and the stability of the coincidence sets.


\section{Main results}\label{section1.1}

Let $a:\Om\times\R^N\rightarrow\R^N$ be a Carath\'eodory function
(\textit{i.e.}, $a(\cdot,\xi)$ is measurable on $\Om$, for every
$\xi\in\R^N$, and $a(x,\cdot)$ is continuous on $\R^N$, a.e.
$x\in\Om$), such that the following assumptions hold:
\begin{equation}\label{assumption1}
a(x,\xi)\cdot\xi\geq \alpha|\xi|^{p(x)},
\end{equation}
a.e. $x\in\Om$, for every $\xi\in\R^N$, where $\alpha$ is a positive
constant;
\begin{equation}\label{assumption2}
|a(x,\xi)|\leq \gamma \left( j(x)+|\xi|^{p(x)-1} \right),
\end{equation}
a.e. $x\in\Om$, for every $\xi\in\R^N$, where $j$ is a nonnegative
function in $L^{p'(\cdot)}(\Om)$ and $\gamma>0$;
\begin{equation}\label{assumption3}
(a(x,\xi)-a(x,\xi'))\cdot(\xi-\xi')>0,
\end{equation}
a.e. $x\in\Om$, for every $\xi,\xi'\in \R^N$, with $\xi\neq\xi'$.

These are standard assumptions when dealing with monotone operators
in divergence form, the novelty being the fact that the exponent
$p(\cdot)$, appearing in \eqref{assumption1} and
\eqref{assumption2}, does not need to be constant but may depend on
the variable $x$. Throughout the paper, the following notation for a
measurable function $q(\cdot):\Omega\rightarrow\mathbb{R}$ will be
used:
$$\underline{q}:= \ \build {\textstyle \rm \mbox{ess
inf}}_{\scriptstyle x\in\Om}^{} \: q(x) \qquad \textrm{and} \qquad
\overline{q}:= \ \build {\textstyle \rm \mbox{ess
sup}}_{\scriptstyle x\in\Om}^{} \: q(x).$$ The exponent is assumed
here to be a measurable function $p(\cdot):\Om\rightarrow \R$ such
that
\begin{equation}\label{assumptionp(x)}
\left\{ \begin{array}{l}
\exists C>0 \ : \  |p(x)-p(y)|\leq\frac{C}{-\ln|x-y|},  \quad \textrm{for} \ |x-y|<\frac{1}{2};\\
\\
1<\underline{p} \leq \overline{p} <N.
\end{array} \right.
\end{equation}
The first  condition says that $p(\cdot)$ is a log-H\"older
continuous function. On the other hand,
the second assumption in \eqref{assumptionp(x)} is quite natural if
one wants to define an appropriate functional setting.
Assumption \eqref{assumptionp(x)}
puts us in the framework of reflexive Sobolev spaces with variable
exponent and allows us to exploit their properties, like the crucial
Poincar\'e and Sobolev inequalities. These generalized
Sobolev-Orlicz spaces consist of measurable functions $v:\Om
\rightarrow \R$, such that $v$ and its distributional gradient
$\nabla v$ are in $L^{p(\cdot)} (\Om)$, the space of functions with
finite modular
$$\varrho_{p(\cdot)}(v) = \int_\Om |v(x)|^{p(x)}\, dx,$$
normed by
$$\|v\|_{p(\cdot)} = \inf \left\{ \lambda >0 \, : \, \varrho_{p(\cdot)}
(v/\lambda) \leq 1\right\}.$$ Under assumption
\eqref{assumptionp(x)}, the variable exponent Lebesgue spaces have
properties similar to those of the classical Lebesgue spaces, being
reflexive and separable Banach spaces, and satisfying the continuous
embedding $L^{p(\cdot)} (\Om) \hookrightarrow L^{q(\cdot)} (\Om)$,
for $\Om$ bounded and $p(x)\geq q(x)$. These spaces are not
invariant to translations (see \cite{KR}) although a H\"{o}lder type
inequality holds. For Sobolev spaces with variable exponent, we can
define $W^{-1,p^\prime(\cdot)} (\Om)$ as the dual space of
$W_0^{1,p(\cdot)} (\Om)$, where Poincar\'e's inequality is also
valid. Besides, the Sobolev embedding
$$W_0^{1,p(\cdot)} (\Om) \hookrightarrow L^{p^\ast(\cdot)} (\Om) \ ,
\qquad p^\ast(\cdot)=\frac{Np(\cdot)}{N-p(\cdot)}$$ still holds (see
\cite{ER00,D,HHKV}).
Let us finally introduce the following notation: given two bounded
measurable functions $p(\cdot),q(\cdot) : \Om \rightarrow \R$, we
write
$$q(\cdot) \ll p(\cdot) \qquad \textrm{if} \qquad \build {\textstyle \rm
\mbox{ess inf}}_{\scriptstyle x\in\Om}^{} \: \left( p(x)-q(x)
\right) > 0.$$

Concerning the right-hand side of $\entropycond$ and the obstacle
$\psi$ we make the following assumptions:
\begin{equation}
f\in L^1(\Om),\quad \psi\in W^{1,p(\cdot)}(\Om), \quad \textrm{and}
\quad \psi^+ \in\Wpx \cap L^\infty (\Om). \label{assumptionf}
\end{equation}
In particular, the last assumption guarantees that $\convex\neq
\emptyset$.

Our first result concerns the existence and uniqueness of an entropy
solution, in the sense of Definition \ref{defop}, to the obstacle
problem; we also obtain regularity results for the solution and its
weak gradient. We recall from \cite{SU06} that it is still possible,
as in the case of a constant $p$ (cf. \cite{BBGGPV95}), to define
the weak gradient of a measurable function $u$ such that $T_t(u)\in
\Wpx$, for all $t>0$. In fact, there exists a unique measurable
vector field $\mathbf{v}:\Om\rightarrow \R^N$ such that
$$\mathbf{v}\chi_{\{|u|<t\}}=\nabla T_t(u), \quad \textrm{a.e. in } \Om, \quad \textrm{for all } t>0.$$
Moreover, if $u \in W^{1,1}_0(\Om)$ then $\mathbf{v}$ coincides with
$\nabla u$, the standard distributional gradient of $u$.

Assuming \eqref{assumptionp(x)}, the critical Sobolev exponent and
the conjugate of $p(\cdot)$ are, respectively,
$$
p^*(\cdot)=\frac{Np(\cdot)}{N-p(\cdot)}\quad\textrm{and}\quad
p'(\cdot)=\frac{p(\cdot)}{p(\cdot)-1}.
$$

The following is one of our main results.

\begin{thm}\label{thm1}
Assume \eqref{assumption1}--\eqref{assumptionf}. There exists a
unique entropy solution $u$ to the obstacle problem $\entropycond$.
Moreover, $|u|^{q(\cdot)}\in L^1(\Om)$, for all $0\ll q(\cdot)\ll
q_0(\cdot)$, and $|\nabla u|^{q(\cdot)}\in L^1(\Om)$, for all $0\ll
q(\cdot) \ll q_1(\cdot)$, where
\begin{equation}\label{q0andq1}
q_0(\cdot):=\frac{p^*(\cdot)}{\overline{p'}}\quad\textrm{and}\quad
q_1(\cdot):=\frac{q_0(\cdot)}{q_0(\cdot)+1}p(\cdot).
\end{equation}
In particular, if $\ 2-1/N \ll p(\cdot)$ then
$$u\in \Wqx, \quad \textrm{for all} \ \ 1\leq q(\cdot)\ll q_1(\cdot).$$
\end{thm}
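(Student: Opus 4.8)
The plan is to follow the by-now classical strategy for $L^1$-data problems, adapted to the variable-exponent setting and to the unilateral constraint, combining an approximation scheme with the \textit{a priori} estimates that (as announced in the introduction) will be established in Section \ref{section2}. First I would approximate the datum $f\in L^1(\Om)$ by a sequence $f_n\in L^\infty(\Om)$ (for instance $f_n=T_n(f)$) with $\|f_n\|_{L^1}\leq\|f\|_{L^1}$ and $f_n\to f$ in $L^1(\Om)$. For each $n$, the truncated datum lies in $W^{-1,p'(\cdot)}(\Om)$, so by the classical theory of monotone operators applied to the variational inequality \eqref{vi} with $f_n$ in place of $f$ (using \eqref{assumption1}--\eqref{assumption3} and the fact that $\convex\neq\emptyset$ by \eqref{assumptionf}), there is a unique $u_n\in{\cal K}_\psi$ solving $\entropycondfn$; moreover $u_n$ is itself an entropy solution for $\{f_n,\psi\}$. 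The regularizing effect of the obstacle together with $\psi^+\in L^\infty(\Om)$ keeps $u_n$ bounded below in a controlled way, so the admissible test functions $\varphi\in\convex$ are available throughout.

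Next I would derive the uniform \textit{a priori} estimates. Testing $\entropycondfn$ with a suitable $\varphi$ (essentially $\varphi=\psi^+$, or $\psi^+$ modified by truncations of $u_n$) and using coercivity \eqref{assumption1}, one obtains the standard energy bound
$$
\int_{\{|u_n|\leq t\}}|\nabla u_n|^{p(x)}\,dx\leq C(t+1),\qquad t>0,
$$
with $C$ independent of $n$. From here, exactly as in \cite{SU06} (which in turn adapts \cite{BBGGPV95}), one passes to estimates in the variable-exponent Marcinkiewicz spaces for $u_n$ and $\nabla u_n$, using the Sobolev embedding $W_0^{1,p(\cdot)}(\Om)\hookrightarrow L^{p^*(\cdot)}(\Om)$ and the log-H\"older continuity of $p(\cdot)$ in \eqref{assumptionp(x)}. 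This yields, uniformly in $n$, a bound on $\varrho_{q(\cdot)}(u_n)$ for every $q(\cdot)\ll q_0(\cdot)$ and on $\varrho_{q(\cdot)}(\nabla u_n)$ for every $q(\cdot)\ll q_1(\cdot)$, with $q_0,q_1$ as in \eqref{q0andq1}; these are precisely the bounds recorded in Theorem \ref{thm1}, and they are the content of Section \ref{section2}. In particular, when $2-1/N\ll p(\cdot)$ one checks that $q_1(\cdot)$ can be taken $\geq 1$, so $\{u_n\}$ is bounded in $\Wqx$ for every $1\leq q(\cdot)\ll q_1(\cdot)$.

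Then comes the passage to the limit. From the truncated-gradient bound one extracts, up to a subsequence, $u_n\to u$ a.e.\ in $\Om$ and $T_t(u_n)\rightharpoonup T_t(u)$ weakly in $\Wpx$ for each $t>0$, with $u\geq\psi$ a.e.\ and $T_t(u)\in\Wpx$. The main obstacle, as always in these problems, is upgrading this to \emph{strong} (or at least a.e.) convergence of the gradients, $\nabla u_n\to\mathbf v=\nabla u$ a.e.\ in $\Om$, so that the Carath\'eodory function $a(x,\cdot)$ and the monotone term can be passed to the limit inside $\entropycondfn$. I would handle this by the standard Boccardo--Murat type argument: test the difference of the inequalities for $u_n$ and a carefully chosen competitor (built from $u$ truncated and regularized, combined with $\psi^+$ so as to remain admissible), use the strict monotonicity \eqref{assumption3} together with the growth bound \eqref{assumption2} and equi-integrability coming from the Marcinkiewicz estimates, and conclude that $(a(x,\nabla u_n)-a(x,\nabla u))\cdot(\nabla u_n-\nabla u)\to 0$ in $L^1$ on each $\{|u|\leq t\}$, hence $\nabla u_n\to\nabla u$ a.e. One then passes to the limit in $\entropycondfn$ using Fatou on the coercive part and dominated convergence on the right-hand side (the truncation $T_t(\varphi-u_n)$ is bounded and $f_n\to f$ in $L^1$), obtaining $\entropycond$ for all $\varphi\in\convex$; this gives existence. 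Finally, uniqueness follows by the comparison technique intrinsic to the entropy formulation: given two entropy solutions $u_1,u_2$ for the same $\{f,\psi\}$, test $\entropycond$ for $u_1$ with a competitor built from $u_2$ and vice versa (again adjusted to stay in $\convex$), add, let the truncation parameter $h\to 0$, and use \eqref{assumption3} to force $\nabla u_1=\nabla u_2$ a.e.\ and hence $u_1=u_2$. The regularity conclusions $|u|^{q(\cdot)}\in L^1(\Om)$, $|\nabla u|^{q(\cdot)}\in L^1(\Om)$, and membership in $\Wqx$ when $2-1/N\ll p(\cdot)$ then pass to the limit from the uniform bounds via Fatou's lemma.
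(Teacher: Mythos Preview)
Your proposal is essentially the paper's own argument: approximate $f$ by bounded $f_n$, solve the variational inequality \eqref{vi} for each $n$, invoke the \textit{a priori} estimates of Section~\ref{section2} (your truncated-energy bound is exactly Lemma~\ref{lem:cris}, and the Marcinkiewicz/Lebesgue bounds are Proposition~\ref{prop3:3} and Corollary~\ref{cor1}), establish convergence in measure of $u_n$ and of $\nabla u_n$ (this is Proposition~\ref{prop:key}, whose part~(ii) is the Boccardo--Murat type step you describe), and pass to the limit by Fatou on the coercive part and weak-$\ast$/$L^1$ convergence on the rest. The paper merely packages the limit passage as the continuous-dependence Theorem~\ref{thm2} and then cites it, but the content is identical.

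One slip to correct in the uniqueness argument: the truncation parameter $h$ must go to $\infty$, not to $0$. The admissible competitors are $T_h u_2$ and $T_h u_1$, which belong to $\mathcal{K}_\psi$ only for $h>\|\psi^+\|_\infty$; one adds the two inequalities and sends $h\to\infty$, using the Marcinkiewicz bounds (Proposition~\ref{lem:u_nbounds}) to kill the error terms supported on $\{|u_i|>h\}$, arriving at
\[
\int_{\{|u_1-u_2|\leq t\}} \bigl(a(x,\nabla u_1)-a(x,\nabla u_2)\bigr)\cdot\nabla(u_1-u_2)\,dx\leq 0,
\]
and concluding via \eqref{assumption3}. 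Sending $h\to 0$ would collapse the test functions to $0$ and yield nothing.
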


\begin{rem}
{\rm Among other results, Boccardo and Cirmi prove in \cite{BC} an
analogous of Theorem \ref{thm1}, for constant $p(\cdot)\equiv p >
2-1/N$, and under the assumption that $\psi\in \Wp \cap
L^\infty(\Om)$. Under our assumptions, since $\psi^+ \in\Wpx \cap
L^\infty (\Om)$, $\psi$ is bounded above but not necessarily bounded
below.} \label{rem:thm1}
\end{rem}

\begin{rem}
{\rm Similar results of existence of entropy solutions for
$L^1-$data could be obtained for more general elliptic operators
with variable growth in the form
$$
\mathcal{A}u=-{\rm div}\, a(x,u,\nabla u)+H(x,u,\nabla u),
$$
where $H$ has the natural growth with respect to the gradient; these
would follow as an extension of the recent results obtained in
\cite{AA06} for constant $p$.
} \label{rem2:thm1}
\end{rem}

We now consider a sequence $\{f_n,\psi_n\}_n$ and the corresponding
obstacle problems $\entropycondn$. The next result states that,
under adequate assumptions, the limit of entropy solutions $u_n$ of
$\entropycondn$ is the solution of the limit obstacle problem
$\entropycond$.

\begin{thm}\label{thm2}
Let $\{f_n,\psi_n\}_n$ be a sequence in $L^1(\Om) \times
W^{1,p(\cdot)}(\Om)$. Assume
\eqref{assumption1}--\eqref{assumptionf} and that ${\psi_n}^+
\in\Wpx \cap L^\infty (\Om)$, for all $n$. Let $u_n$ be the entropy
solution of the obstacle problem ${\rm(}1.5{\rm})_{f_n,\psi_n}$. If
\begin{equation}\label{convergence:fn}
f_n  \longrightarrow f \quad \textrm{in }L^1 (\Om) \qquad and \qquad
\psi_n \longrightarrow \psi \quad \textrm{in }W^{1,p(\cdot)}(\Om),
\end{equation}
then
$$u_n  \longrightarrow u \quad \textrm{in measure},$$
where $u$ is the unique entropy solution of the obstacle problem
$\entropycond$. If $2-1/N \ll p(\cdot)$ then
$$u_n  \rightharpoonup u \quad \textrm{in}\quad \Wqx, \quad \textrm{for all}
\ 1\leq q(\cdot)\ll q_1(\cdot).$$
\end{thm}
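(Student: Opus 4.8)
The plan is to obtain the convergence of $u_n$ to $u$ via a priori estimates that are \emph{uniform in $n$}, combined with the uniqueness statement of Theorem \ref{thm1}. First I would observe that, by the hypotheses \eqref{convergence:fn}, the sequences $\{f_n\}_n$ and $\{\psi_n\}_n$ are bounded in $L^1(\Om)$ and $W^{1,p(\cdot)}(\Om)$ respectively; in particular, $\|\psi_n^+\|_{\infty}$ need \emph{not} be bounded, so the role of the $L^\infty$ assumption must be handled carefully — see below. The a priori estimates proved in section \ref{section2} for a solution of $\entropycondn$ depend on $f_n$ only through its $L^1$-norm and on $\psi_n$ only through its $W^{1,p(\cdot)}(\Om)$-norm (and possibly $\|\psi_n^+\|_\infty$ in the passage where boundedness of a test function is needed); I would check that the constants there can be taken independent of $n$. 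This yields: $\{T_t(u_n)\}_n$ bounded in $\Wpx$ for each fixed $t>0$; $\{|u_n|^{q(\cdot)}\}_n$ bounded in $L^1(\Om)$ for $0\ll q(\cdot)\ll q_0(\cdot)$; and $\{|\mathbf{v}_n|^{q(\cdot)}\}_n$ bounded in $L^1(\Om)$ for $0\ll q(\cdot)\ll q_1(\cdot)$, where $\mathbf v_n$ is the weak gradient of $u_n$.

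Next I would extract convergence in measure. From the uniform bound on $\{|u_n|^{q(\cdot)}\}_n$ one controls the measure of the level sets $\{|u_n|>k\}$ uniformly in $n$; from the uniform $\Wpx$-bound on $\{T_t(u_n)\}_n$ together with the standard estimate on $\{|T_t(u_n)-T_t(u_m)|>\sigma\}$ one shows that $\{u_n\}_n$ is a Cauchy sequence in measure. (This is exactly the scheme of \cite{BBGGPV95} and \cite{SU06}, adapted to the present uniform-in-$n$ setting.) Hence $u_n\to w$ in measure, for some measurable function $w$, and, up to a subsequence, a.e. in $\Om$; passing to further subsequences, $\nabla T_t(u_n)\rightharpoonup \nabla T_t(w)$ weakly in $\Lpx$, and — using the monotonicity assumption \eqref{assumption3} and the by-now classical almost-everywhere-convergence-of-the-gradients argument (Boccardo--Murat type, as used in \cite{SU06}) — $\mathbf v_n\to \mathbf v$ a.e., where $\mathbf v$ is the weak gradient of $w$, and $a(x,\mathbf v_n)\to a(x,\mathbf v)$ a.e. with suitable equi-integrability to pass to the limit in the integrals.

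The main obstacle is passing to the limit in the variational inequality $\entropycondn$ to show that $w$ is an entropy solution of $\entropycond$. The difficulty is twofold. First, the admissible test functions $\varphi$ live in $\mathcal K_{\psi_n}\cap L^\infty(\Om)$, a constraint set that varies with $n$; given $\varphi\in\convex$ one must construct $\varphi_n\in\convexn$ with $\varphi_n\to\varphi$ appropriately — a natural choice is $\varphi_n:=\varphi+(\psi_n-\psi)^+$ or $\varphi_n:=\max\{\varphi,\psi_n\}$, whose convergence in $W^{1,p(\cdot)}(\Om)$ follows from $\psi_n\to\psi$ in $W^{1,p(\cdot)}(\Om)$ and from lattice-operation continuity in that space. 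One has to verify $\varphi_n\in L^\infty(\Om)$, which is where the hypothesis $\psi_n^+\in L^\infty(\Om)$ enters; note that uniform boundedness of $\|\psi_n^+\|_\infty$ is \emph{not} needed here, only boundedness for each fixed $n$ together with $\psi_n\to\psi$ in $W^{1,p(\cdot)}$. Second, one must control the term $\int_\Om a(x,\mathbf v_n)\cdot\nabla T_t(\varphi_n-u_n)\,dx$: split $\nabla T_t(\varphi_n-u_n)$ on $\{|\varphi_n-u_n|<t\}$, use $\mathbf v_n\to\mathbf v$ a.e. and the weak convergence $\nabla\varphi_n\rightharpoonup\nabla\varphi$, together with a Fatou/semicontinuity argument on the ``good'' part involving $a(x,\mathbf v_n)\cdot\mathbf v_n\geq\alpha|\mathbf v_n|^{p(x)}$ from \eqref{assumption1}; the right-hand side $\int_\Om f_n\,T_t(\varphi_n-u_n)\,dx\to\int_\Om f\,T_t(\varphi-u)\,dx$ by $f_n\to f$ in $L^1$ and dominated convergence ($|T_t(\cdot)|\leq t$). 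This establishes $\entropycond$ for $w$; by the uniqueness part of Theorem \ref{thm1}, $w=u$, and since every subsequence has a sub-subsequence converging in measure to the same limit $u$, the whole sequence converges, $u_n\to u$ in measure. Finally, when $2-1/N\ll p(\cdot)$, Theorem \ref{thm1} gives $u_n\in\Wqx$ with a bound uniform in $n$ for $1\leq q(\cdot)\ll q_1(\cdot)$ (the estimate on $|\mathbf v_n|^{q(\cdot)}$ in $L^1$ upgrades, via the embeddings valid under \eqref{assumptionp(x)}, to a bound in $\Wqx$); by reflexivity, $u_n\rightharpoonup u$ weakly in $\Wqx$, the limit being identified through the a.e. convergence already obtained.
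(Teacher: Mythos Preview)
Your proposal is correct and follows essentially the same scheme as the paper's proof (Proposition~\ref{prop:key} packages the uniform a priori estimates and the convergence in measure of $u_n$ and $\nabla u_n$; the passage to the limit in $\entropycondn$ then proceeds exactly as you describe, via shifted test functions $\varphi_n$, Fatou's lemma on the coercive part $a(x,\nabla u_n)\cdot\nabla u_n$, and weak convergence of $a(x,\nabla T_s(u_n))$ on the remainder). One caution: your first candidate $\varphi_n=\varphi+(\psi_n-\psi)^+$ need not lie in $L^\infty(\Om)$, since $\psi$ is only bounded above (cf.\ Remark~\ref{rem:thm1}); the paper uses your second choice $\varphi_n=\varphi+(\psi_n-\varphi)^+=\max\{\varphi,\psi_n\}$, which is bounded because $\varphi\in L^\infty(\Om)$ and $\psi_n^+\in L^\infty(\Om)$.
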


We also establish the so-called Lewy--Stampacchia inequalities and
deduce from them a few interesting properties. These results require
the extra assumption $p(\cdot)-1\ll q_1(\cdot)$, which is necessary
in order to pass to the limit in a sequence of approximated obstacle
problems (see Proposition \ref{L1convergence} and Remark
\ref{new_rem}).

\begin{thm}\label{thm3}
Assume \eqref{assumption1}--\eqref{assumptionf},
${\mathcal A}\psi\in L^1 (\Om)$, and $p(\cdot)-1\ll q_1(\cdot)$.
Let $u$ be the entropy solution of the obstacle problem
$\entropycond$. Then ${\mathcal A}u \in L^1(\Om)$ and the following
Lewy--Stampacchia inequalities hold
\begin{equation}
f \leq {\mathcal A}u \leq f+({\mathcal A}\psi-f)^+, \quad \mbox{a.e.
in } \Om. \label{LeviStam_ineq}
\end{equation}
\end{thm}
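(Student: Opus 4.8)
The plan is to obtain \eqref{LeviStam_ineq} by a penalization/approximation argument, deducing the inequalities first at the level of regularized problems with $L^\infty$ (or at least $W^{-1,p'(\cdot)}$) data and then passing to the limit. First I would recall the classical variational fact: if $g\in W^{-1,p'(\cdot)}(\Om)$ and $w$ solves the variational inequality \eqref{vi} with datum $g$ and obstacle $\psi$ (with $\mathcal A\psi\in L^1$, so that $(\mathcal A\psi-g)^+$ makes sense as a measure/function), then $g\le \mathcal Aw\le g+(\mathcal A\psi-g)^+$ a.e. in $\Om$; this is the standard Lewy–Stampacchia inequality obtainable from the penalized equation $\mathcal Aw_\varepsilon + \tfrac1\varepsilon(w_\varepsilon-\psi)^- = g$ by testing against suitable functions and using that $\mathcal A\psi$ is a measure bounded above by the right-hand side on the coincidence set. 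I would then introduce the truncated data $f_n := T_n(f)$, so $f_n\in L^\infty(\Om)\subset W^{-1,p'(\cdot)}(\Om)$ and $f_n\to f$ in $L^1(\Om)$, and let $u_n$ be the corresponding variational (hence also entropy) solution of the obstacle problem for $\{f_n,\psi\}$; for these we already have
\begin{equation}\label{LSn}
f_n\le \mathcal Au_n\le f_n+(\mathcal A\psi-f_n)^+,\qquad\textrm{a.e. in }\Om.
\end{equation}

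Next I would establish the needed compactness to pass to the limit $n\to\infty$ in \eqref{LSn}. By Theorem \ref{thm2} (with $\psi_n\equiv\psi$), $u_n\to u$ in measure and $u_n\rightharpoonup u$ in $\Wqx$ for every $1\le q(\cdot)\ll q_1(\cdot)$; combined with the $a$ priori estimates of Theorem \ref{thm1} one gets $\nabla u_n\to\nabla u$ a.e. (a Boccardo–Murat type almost-everywhere convergence of gradients for these obstacle problems), hence $a(x,\nabla u_n)\to a(x,\nabla u)$ a.e. To upgrade this to convergence of $\mathcal Au_n=-{\rm div}\,a(x,\nabla u_n)$ in $L^1(\Om)$ — equivalently, to an $L^1$-bound and equi-integrability of $\mathcal Au_n$ — I would use \eqref{LSn} itself: since $|f_n|\le|f|$ and $(\mathcal A\psi-f_n)^+\le(\mathcal A\psi)^++|f|$ pointwise, \eqref{LSn} gives the uniform domination
$$|\mathcal Au_n|\le |f| + (\mathcal A\psi)^+ + |f| \in L^1(\Om),$$
so $\{\mathcal Au_n\}$ is bounded in $L^1$ and equi-integrable by the dominated convergence lemma. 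This is exactly the point where the structural hypothesis $p(\cdot)-1\ll q_1(\cdot)$ enters, as flagged in the statement via Proposition \ref{L1convergence} and Remark \ref{new_rem}: it is what guarantees $|a(x,\nabla u_n)|$ stays bounded in a space good enough for the distributional divergence to converge and for $\mathcal Au\in L^1(\Om)$, i.e. that the a.e. limit of $\mathcal Au_n$ is genuinely $\mathcal Au$ in $\mathcal D'(\Om)$.

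Having $\mathcal Au_n\to\mathcal Au$ in $L^1(\Om)$ (after extracting a subsequence, which suffices by uniqueness of $u$), together with $f_n\to f$ in $L^1(\Om)$ and $(\mathcal A\psi-f_n)^+\to(\mathcal A\psi-f)^+$ in $L^1(\Om)$ (again by dominated convergence, using $\mathcal A\psi\in L^1$), I would pass to the limit in the pointwise inequality \eqref{LSn} to obtain \eqref{LeviStam_ineq}, and in particular $\mathcal Au\in L^1(\Om)$. The main obstacle is the middle step: proving the $L^1$-convergence $\mathcal Au_n\to\mathcal Au$, which requires the a.e. convergence of the gradients $\nabla u_n$ for the \emph{obstacle} problem with merely $L^1$ (truncated) data and the identification of the distributional limit — this is precisely where $p(\cdot)-1\ll q_1(\cdot)$ is indispensable and where one must invoke the approximation/stability machinery (Theorem \ref{thm2}, Proposition \ref{L1convergence}) rather than any soft argument. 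Everything else is either the classical Lewy–Stampacchia inequality for regular data or routine applications of dominated convergence.
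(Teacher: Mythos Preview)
Your proposal is correct and follows essentially the same route as the paper: approximate $f$ by bounded $f_n$, invoke the classical Lewy--Stampacchia inequalities for the variational solutions $u_n$, and pass to the limit using Proposition~\ref{L1convergence} (which is exactly where $p(\cdot)-1\ll q_1(\cdot)$ is used). The only cosmetic difference is that the paper passes to the limit directly in the distributional form $\int f_n\varphi\le\int a(x,\nabla u_n)\cdot\nabla\varphi\le\int[f_n+(\mathcal A\psi-f_n)^+]\varphi$ for $0\le\varphi\in\mathcal D(\Om)$, using only $a(x,\nabla u_n)\to a(x,\nabla u)$ in $L^1$, and then reads off $\mathcal Au\in L^1(\Om)$ from the resulting sandwich between two $L^1$ functions---so your extra equi-integrability layer for $\mathcal Au_n$ (which, incidentally, gives only weak $L^1$ compactness, not strong convergence) is not needed.
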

The most immediate consequences of the Lewy--Stampacchia
inequalities concern the regularity of solutions. If $f$, ${\mathcal
A}\psi\in L^{m(\cdot)}(\Om)$, with
$m(\cdot)=(p^\ast(\cdot))^\prime$, then the entropy solution $u$ of
the obstacle problem $\entropycond$ is the variational solution $u
\in W^{1,p(\cdot)}_0 (\Om)$ of \eqref{vi}, for which the following
regularity assertions hold.

\begin{prop}
Assume \eqref{assumption1}--\eqref{assumptionf}
and $p(\cdot)-1\ll q_1(\cdot)$.
If $f$, ${\mathcal A}\psi \in L^\infty (\Om)$ then the solution
$u$ of \eqref{vi} is such that $u \in L^\infty (\Om) \cap
C^{0,\alpha} (\Om)$. If, in addition, $\partial \Om \in C^{0,1}$
then $u \in C^{0,\alpha} (\overline{\Om})$.

Moreover, in the case that ${\mathcal A} \equiv \Delta_{p(\cdot)}$,
we further have $u \in C_{\mathrm{loc}}^{1,\alpha'} (\Om)$.
\end{prop}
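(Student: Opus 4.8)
The plan is to obtain this regularity statement as a corollary of Theorem~\ref{thm3} by reducing the obstacle problem to a Dirichlet problem with bounded data, and then invoking the known regularity theory for equations with $p(\cdot)$-growth. First I would observe that under the hypotheses $f, {\mathcal A}\psi\in L^\infty(\Om)$, the assumption \eqref{assumptionf} together with the Lewy--Stampacchia inequalities \eqref{LeviStam_ineq} of Theorem~\ref{thm3} gives ${\mathcal A}u=f+\mu$, a.e.\ in $\Om$, where $0\leq\mu\leq({\mathcal A}\psi-f)^+\in L^\infty(\Om)$; hence the entropy solution satisfies ${\mathcal A}u=g$ with $g:={\mathcal A}u\in L^\infty(\Om)$. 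Since $L^\infty(\Om)\subset L^{m(\cdot)}(\Om)$ for $m(\cdot)=(p^\ast(\cdot))'$ (as $\Om$ is bounded), the remark preceding the Proposition applies and $u$ is in fact the variational solution $u\in\Wpx$ of \eqref{vi}; equivalently $u\in\Wpx$ solves the equation ${\mathcal A}u=g$ in the weak sense, with $g\in L^\infty(\Om)$.

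Next I would invoke the local H\"older regularity theory for weak solutions of $-{\rm div}\,a(x,\nabla u)=g$ under the structure conditions \eqref{assumption1}--\eqref{assumption3}, the log-H\"older continuity of $p(\cdot)$ in \eqref{assumptionp(x)}, and $g\in L^\infty(\Om)$: by the results of Acerbi--Mingione \cite{AM01} (for functionals, hence for the Euler--Lagrange equations) and of Fan--Zhao \cite{FZ}, Alkhutov \cite{Al}, one gets $u\in C^{0,\alpha}_{\rm loc}(\Om)$ for some $\alpha\in(0,1)$; combined with the global $L^\infty$ bound (which follows from $g\in L^\infty$, $u\in\Wpx$ via the De~Giorgi--Stampacchia truncation method adapted to variable exponents) this yields $u\in L^\infty(\Om)\cap C^{0,\alpha}(\Om)$. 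For the boundary regularity, when $\partial\Om\in C^{0,1}$ I would use the homogeneous Dirichlet datum $u=0$ on $\partial\Om$ together with barrier/flattening arguments in the variable-exponent setting to extend H\"older continuity up to the boundary, obtaining $u\in C^{0,\alpha}(\overline{\Om})$. Finally, in the model case ${\mathcal A}\equiv\Delta_{p(\cdot)}$, the interior $C^{1,\alpha'}_{\rm loc}$ estimate for $p(\cdot)$-harmonic type equations with bounded right-hand side is available from \cite{Al} (and \cite{FZ}), so $u\in C^{1,\alpha'}_{\rm loc}(\Om)$.

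The main obstacle, and the only genuinely nontrivial point, is the reduction step: one must be sure that the entropy solution produced by Theorem~\ref{thm1} coincides with the variational solution of \eqref{vi} once the data are regular enough. This is where the Lewy--Stampacchia inequalities are essential --- they identify ${\mathcal A}u$ as an $L^\infty$ (hence $L^{m(\cdot)}$) function, after which uniqueness of the variational inequality \eqref{vi} and uniqueness of the entropy solution force the two notions of solution to agree. Everything after that is a citation to the cited regularity literature (\cite{Al}, \cite{FZ}, \cite{AM01}); no new estimates specific to the obstacle are needed, precisely because \eqref{LeviStam_ineq} transfers the regularity question from the obstacle problem to the associated equation with bounded right-hand side.
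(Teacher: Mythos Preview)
Your proposal is correct and follows essentially the same route as the paper: use the Lewy--Stampacchia inequalities from Theorem~\ref{thm3} to identify ${\mathcal A}u$ as an $L^\infty$ function, note that the entropy solution then coincides with the variational solution of \eqref{vi}, and invoke the existing regularity literature for equations with $p(\cdot)$-growth. The paper's own justification is even terser than yours --- just a two-sentence citation --- but the logic is identical.

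One small point of attribution: for the $C^{1,\alpha'}_{\mathrm{loc}}$ conclusion in the $\Delta_{p(\cdot)}$ case the paper cites \cite[Theorem~2.2]{AM01} (gradient H\"older regularity for minimizers with non-standard growth), not \cite{Al} or \cite{FZ}, which concern only H\"older continuity of $u$ itself. Your invocation of \cite{Al} and \cite{FZ} for the gradient estimate is misplaced; otherwise the argument is fine.
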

The first part is a straightforward consequence of \cite[Theorems
4.2--4.4]{FZ}, where the H\"{o}lder continuity of weak solutions of
quasilinear elliptic equations with variable growth is obtained; the
second part follows from \cite[Theorem 2.2]{AM01}, that concerns the
H\"{o}lder continuity of the gradient of local minimizers of
functionals with non-standard growth.

Using the Lewy--Stampacchia inequalities and showing that ${\mathcal
A}u=f$, a.e. in $\{u>\psi\}$, we prove that the entropy solution of
$\entropycond$ satisfies an equation involving the coincidence set
$\{u=\psi\}$.

\begin{thm}\label{thm4}
Assume \eqref{assumption1}--\eqref{assumptionf},
${\mathcal A}\psi\in L^1(\Om)$, and $p(\cdot)-1\ll q_1(\cdot)$.
The entropy solution $u$ of the obstacle problem $\entropycond$
satisfies the equation
\begin{equation}
{\mathcal A}u - ({\mathcal A}\psi-f) \chi_{\{u=\psi\}} =f \ ,\quad
\mbox{a.e. in } \Om. \label{eq}
\end{equation}
\end{thm}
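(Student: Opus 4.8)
The plan is to deduce \eqref{eq} from the Lewy--Stampacchia inequalities \eqref{LeviStam_ineq} of Theorem~\ref{thm3} together with the complementary fact that $\mathcal{A}u=f$ a.e.\ on the noncoincidence set $\{u>\psi\}$. First I would record that, since $\mathcal{A}\psi\in L^1(\Om)$ and $p(\cdot)-1\ll q_1(\cdot)$, Theorem~\ref{thm3} applies and gives $\mathcal{A}u\in L^1(\Om)$, so every term in \eqref{eq} is an honest $L^1$ function and the identity makes pointwise sense a.e. Rewriting \eqref{LeviStam_ineq} as $0\leq \mathcal{A}u-f\leq(\mathcal{A}\psi-f)^+$ a.e.\ in $\Om$, it then suffices to show the two set-wise identities: (a) $\mathcal{A}u=f$ a.e.\ on $\{u>\psi\}$, and (b) $\mathcal{A}u=\mathcal{A}\psi$ a.e.\ on $\{u=\psi\}\cap\{\mathcal{A}\psi>f\}$, while on $\{u=\psi\}\cap\{\mathcal{A}\psi\leq f\}$ the upper bound forces $\mathcal{A}u=f=\mathcal{A}\psi$ there as well; combining these cases yields exactly $\mathcal{A}u-(\mathcal{A}\psi-f)\chi_{\{u=\psi\}}=f$.

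The heart of the argument is step~(a). To prove $\mathcal{A}u=f$ a.e.\ on $\{u>\psi\}$, I would test the entropy inequality $\entropycond$ with perturbations of $u$ that are admissible, i.e.\ of the form $\varphi=u\pm\theta$ with $\theta$ a suitable bounded function supported (essentially) in $\{u>\psi\}$, so that $\varphi\in\convex$. Concretely, for a fixed ball or measurable set $\omega\Subset\{u-\psi>\delta\}$ for small $\delta>0$, and a test function $\theta=\lambda\phi$ with $\phi\in L^\infty$, $0\leq\phi\leq 1$ supported in $\omega$, for $\lambda$ small enough in absolute value we have $u\pm\lambda\phi\geq\psi$ a.e., hence these are legitimate competitors. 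Plugging $\varphi=u+\lambda\phi$ and $\varphi=u-\lambda\phi$ into $\entropycond$, using that on the support of $\phi$ the truncation $T_t(\varphi-u)=T_t(\pm\lambda\phi)=\pm\lambda\phi$ for $t$ large (relative to $\lambda\|\phi\|_\infty$), and that $\nabla T_t(\varphi-u)=\pm\lambda\nabla\phi$ there, while elsewhere $T_t(\varphi-u)=0$, one gets
$$\pm\lambda\int_\Om a(x,\nabla u)\cdot\nabla\phi\,dx\geq\pm\lambda\int_\Om f\phi\,dx,$$
and dividing by $\lambda>0$ (resp.\ $\lambda<0$) the two inequalities combine to the equality $\int_\Om a(x,\nabla u)\cdot\nabla\phi\,dx=\int_\Om f\phi\,dx$. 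Since this holds for all such $\phi$ and $\mathcal{A}u\in L^1(\Om)$, a density argument gives $\mathcal{A}u=f$ a.e.\ on $\{u-\psi>\delta\}$, and letting $\delta\downarrow 0$ yields $\mathcal{A}u=f$ a.e.\ on $\{u>\psi\}$.

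Finally, on the coincidence set one uses that $u=\psi$ a.e.\ on $\{u=\psi\}$ implies $\nabla u=\nabla\psi$ a.e.\ there (a standard property of Sobolev functions, valid for the weak gradient $\mathbf{v}$ since $T_t(u)\in\Wpx$ and $\psi\in W^{1,p(\cdot)}(\Om)$), hence $a(x,\nabla u)=a(x,\nabla\psi)$ and $\mathcal{A}u=\mathcal{A}\psi$ a.e.\ on $\{u=\psi\}$ in the distributional sense up to the caveat that $\mathcal{A}\psi$ need only be interpreted consistently with \eqref{LeviStam_ineq}; in any case the upper Lewy--Stampacchia bound $\mathcal{A}u\leq f+(\mathcal{A}\psi-f)^+$ already pins down $\mathcal{A}u$ on $\{u=\psi\}$, giving $\mathcal{A}u=\max\{f,\mathcal{A}\psi\}$ there when combined with the lower bound and the gradient identity, and $\max\{f,\mathcal{A}\psi\}=f+(\mathcal{A}\psi-f)^+$ is precisely $f+(\mathcal{A}\psi-f)\chi_{\{\mathcal{A}\psi>f\}}$, matching \eqref{eq} on $\{u=\psi\}$. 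Assembling (a) and the coincidence-set analysis produces \eqref{eq} a.e.\ in $\Om$. The main obstacle I anticipate is making the localization in step~(a) fully rigorous in the entropy framework: one must check carefully that the perturbed functions $u\pm\lambda\phi$ are admissible test functions in $\convex$ (in particular still $\geq\psi$ and in $L^\infty$ only where needed, but the entropy inequality only requires $\varphi\in\convex$, so one should truncate $u$ appropriately or approximate), and that the truncations $T_t$ can be removed in the limit $t\to\infty$ using the integrability $\mathcal{A}u\in L^1(\Om)$ and dominated convergence — this is where the hypothesis $p(\cdot)-1\ll q_1(\cdot)$ and the a~priori gradient estimates of Theorem~\ref{thm1} do the real work.
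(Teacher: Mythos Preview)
Your overall strategy matches the paper's: split $\Om$ into $\{u>\psi\}$ and $\{u=\psi\}$, prove ${\mathcal A}u=f$ on the former by perturbation (this is Lemma~\ref{lem1}), and prove ${\mathcal A}u={\mathcal A}\psi$ on the latter (this is Lemma~\ref{lem2}). Two points need repair.

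For step~(a), the competitor $u\pm\lambda\phi$ is not in $\convex$ because entropy solutions are not in $L^\infty(\Om)$ in general. You anticipate the fix correctly: the paper tests with $\varphi=T_h(u)\pm\varepsilon\phi$ for $h>\sup_\Om\psi$ and $\phi\in\mathcal{D}(\{u>\psi\})$, uses \eqref{assumption1} to drop the part on $\{|u|>h\}$ with the right sign, and sends $h\to\infty$ via Proposition~\ref{L1convergence}(i).

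The real gap is on the coincidence set. From $\nabla u=\nabla\psi$ a.e.\ on $\{u=\psi\}$ you correctly get $a(x,\nabla u)=a(x,\nabla\psi)$ a.e.\ there, but the step to ${\mathcal A}u={\mathcal A}\psi$ a.e.\ there is \emph{not} automatic: the divergence is a distributional operation, and pointwise equality of a vector field on a measurable set does not by itself force equality of its divergence on that set. Your fallback to the Lewy--Stampacchia bounds does not close this: on $\{u=\psi\}$ they only say $f\le{\mathcal A}u\le f+({\mathcal A}\psi-f)^+$, which leaves ${\mathcal A}u$ undetermined between $f$ and $\max\{f,{\mathcal A}\psi\}$, and your claim that they ``pin down'' ${\mathcal A}u=\max\{f,{\mathcal A}\psi\}$ already presupposes the gradient-to-divergence inference you are trying to avoid. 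The paper isolates the missing ingredient as Lemma~\ref{lem2}: if $\pmb{\xi}\in[L^1(\Om)]^N$ and $\mathrm{div}\,\pmb{\xi}\in L^1(\Om)$, then $\mathrm{div}\,\pmb{\xi}=0$ a.e.\ on $\{\pmb{\xi}=\pmb{0}\}$, proved by density of $[C^1(\overline{\Om})]^N$ in this graph space (arguments from \cite[pp.~52--53]{KinStam80}). One applies it with $\pmb{\xi}=a(x,\nabla u)-a(x,\nabla\psi)$, which lies in the right space since $a(\cdot,\nabla u)\in L^{q(\cdot)}$ by Proposition~\ref{L1convergence}(ii), $a(\cdot,\nabla\psi)\in L^{p'(\cdot)}$ by \eqref{assumption2}, and $\mathrm{div}\,\pmb{\xi}={\mathcal A}\psi-{\mathcal A}u\in L^1(\Om)$ by Theorem~\ref{thm3} and the hypothesis ${\mathcal A}\psi\in L^1(\Om)$. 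With this lemma in hand, ${\mathcal A}u={\mathcal A}\psi$ a.e.\ on $\{u=\psi\}$ follows directly, and \eqref{eq} is immediate---no case analysis on the sign of ${\mathcal A}\psi-f$ is needed.
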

We note that \eqref{LeviStam_ineq} and \eqref{eq} imply, in
particular,
$$({\mathcal A}\psi-f) \chi_{\{u=\psi\}} = ({\mathcal A}
\psi-f)^+\chi_{\{u=\psi\}} \ , \quad \textrm{a.e. in }\Om.$$

The next result establishes the convergence of the coincidence set
of a sequence of entropy solutions to the limit coincidence set.

\begin{thm}\label{thm5}
Under the assumptions of Theorem {\rm \ref{thm2}}, assume that
$p(\cdot)-1\ll q_1(\cdot)$,
$${\mathcal A}\psi_n \longrightarrow {\mathcal A}\psi \quad \textrm{in }L^1(\Om) \qquad and
\qquad {\mathcal A}\psi\neq f, \quad \textrm{a.e. in }\Om.$$ Then
\begin{equation}\label{limit_coincidence}
\chi_{\{u_n=\psi_n\}}  \longrightarrow  \chi_{\{u=\psi\}} \quad
\textrm{in }L^{q}(\Om),
\end{equation}
for all $1\leq q<+\infty$.
\end{thm}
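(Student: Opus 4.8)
The plan is to deduce the $L^q$-convergence of the characteristic functions from the already-established convergence $u_n\to u$ in measure together with the equation \eqref{eq} for each $u_n$ and for $u$. The starting point is that, by Theorem~\ref{thm3} applied to each obstacle problem and to the limit one, each $u_n$ satisfies
$${\mathcal A}u_n - ({\mathcal A}\psi_n-f_n)\,\chi_{\{u_n=\psi_n\}} = f_n\,,\quad\text{a.e. in }\Om\,,$$
and $u$ satisfies \eqref{eq}; moreover ${\mathcal A}u_n\to{\mathcal A}u$ in $L^1(\Om)$ will follow from Theorem~\ref{thm2} and the stability of the operator ${\mathcal A}$ under the convergences in \eqref{convergence:fn} (this is presumably proved in Section~\ref{section3} or is part of the machinery behind Theorem~\ref{thm2}; in any case it may be assumed as it underlies the $L^1$-convergence of the Lewy--Stampacchia right-hand sides). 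Subtracting the equation for $u$ from that for $u_n$ and using $f_n\to f$, ${\mathcal A}\psi_n\to{\mathcal A}\psi$ in $L^1$, one obtains
$$({\mathcal A}\psi_n-f_n)\,\chi_{\{u_n=\psi_n\}} \longrightarrow ({\mathcal A}\psi-f)\,\chi_{\{u=\psi\}}\quad\text{in }L^1(\Om)\,.$$

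Next I would pass from this $L^1$-convergence of the \emph{products} to $L^1$-convergence of the $\chi$'s themselves, which is exactly where the non-degeneracy hypothesis ${\mathcal A}\psi\neq f$ a.e. enters. Write $g_n:={\mathcal A}\psi_n-f_n$ and $g:={\mathcal A}\psi-f$, so $g_n\to g$ in $L^1$ and $g\neq 0$ a.e. On the set $\{u=\psi\}$ one has $\chi_{\{u=\psi\}}=g^{-1}(g\,\chi_{\{u=\psi\}})$, and similarly $\chi_{\{u_n=\psi_n\}}=g_n^{-1}(g_n\,\chi_{\{u_n=\psi_n\}})$ on $\{g_n\neq0\}$ (and the sets $\{g_n=0\}$ have measure tending to $0$, or one argues along a subsequence where $g_n\to g$ a.e.); combining this with the convergence $g_n\chi_{\{u_n=\psi_n\}}\to g\chi_{\{u=\psi\}}$ in $L^1$ and $g_n\to g$ a.e.\ (along a subsequence) yields $\chi_{\{u_n=\psi_n\}}\to\chi_{\{u=\psi\}}$ a.e., at least along a subsequence. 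Since $\chi_{\{u_n=\psi_n\}}$ are uniformly bounded by $1$ on the bounded domain $\Om$, dominated convergence upgrades the a.e.\ convergence to $L^q(\Om)$ for every $1\le q<+\infty$. Finally, the usual subsequence argument (every subsequence has a further subsequence converging to the same limit $\chi_{\{u=\psi\}}$, whose uniqueness is guaranteed by the uniqueness part of Theorem~\ref{thm1} for $u$) gives convergence of the full sequence.

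An alternative, perhaps cleaner, route avoids dividing by $g_n$: it suffices to show $\chi_{\{u_n=\psi_n\}}\to\chi_{\{u=\psi\}}$ in measure, since bounded-in-measure convergence plus the uniform bound gives $L^q$-convergence by dominated convergence on $\Om$ bounded. For fixed $\delta>0$ one controls $|\{|\chi_{\{u_n=\psi_n\}}-\chi_{\{u=\psi\}}|>\delta\}|$ by splitting $\Om$ according to whether $|g|$ is large or small: on $\{|g|\ge\varepsilon\}$ the $L^1$-convergence of the products forces the $\chi$'s to coincide except on a set of small measure, while the set $\{|g|<\varepsilon\}$ has measure going to $0$ as $\varepsilon\to0$ precisely because $g\neq0$ a.e.

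The main obstacle is the second step, i.e.\ deconvolving the product $g_n\chi_{\{u_n=\psi_n\}}$ into its factors: the hypothesis ${\mathcal A}\psi\ne f$ a.e.\ is essential and must be used quantitatively (through the measure of the superlevel sets $\{|g|\ge\varepsilon\}$), and some care is needed because $g_n$ may vanish on small sets and need not converge to $g$ pointwise without passing to a subsequence. Everything else — the equation \eqref{eq} for each member of the sequence, the $L^1$-stability of the data and of ${\mathcal A}u_n$, and the final dominated-convergence upgrade on the bounded domain $\Om$ — is routine once Theorems~\ref{thm2} and~\ref{thm3} are in hand.
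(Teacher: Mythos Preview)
Your argument has a circularity at its first step. You assert that ${\mathcal A}u_n\to{\mathcal A}u$ strongly in $L^1(\Om)$, but this is nowhere established: Proposition~\ref{L1convergence} gives only $a(x,\nabla u_n)\to a(x,\nabla u)$ in $L^1$, hence ${\mathcal A}u_n\to{\mathcal A}u$ in $\mathcal{D}'(\Om)$, and the Lewy--Stampacchia bounds then upgrade this to weak $L^1$ convergence via Dunford--Pettis, but not to strong $L^1$. Worse, since the equation reads $g_n\chi_{\{u_n=\psi_n\}}={\mathcal A}u_n-f_n$, strong $L^1$ convergence of ${\mathcal A}u_n$ is \emph{equivalent} to strong $L^1$ convergence of the products $g_n\chi_{\{u_n=\psi_n\}}$; once $g_n\to g$ in $L^1$ with $g\neq 0$ a.e., the latter is essentially the conclusion you are trying to prove. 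Your ``alternative, cleaner route'' inherits the same gap, since it too starts from ``the $L^1$-convergence of the products''.

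The paper avoids this by reversing the logic. It first extracts a weak-$\ast$ limit $\chi\in L^\infty(\Om)$ of (a subsequence of) $\chi_{\{u_n=\psi_n\}}$, which needs only the uniform bound $0\le\chi_{\{u_n=\psi_n\}}\le 1$. Passing to the limit in $\mathcal{A}u_n=f_n+g_n\chi_{\{u_n=\psi_n\}}$ then requires only the distributional convergence of ${\mathcal A}u_n$ together with the pairing of $g_n\to g$ strongly in $L^1$ against $\chi_{\{u_n=\psi_n\}}\rightharpoonup\chi$ weak-$\ast$, and this yields $g\chi=g\chi_{\{u=\psi\}}$ a.e.; the non-degeneracy $g\neq 0$ a.e.\ then forces $\chi=\chi_{\{u=\psi\}}$. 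The upgrade to strong $L^q$ convergence is the standard fact that a sequence of characteristic functions converging weakly-$\ast$ to a characteristic function converges strongly (for instance $\|\chi_{\{u_n=\psi_n\}}\|_2^2=\int_\Om\chi_{\{u_n=\psi_n\}}\to\int_\Om\chi=\|\chi\|_2^2$, hence strong $L^2$, hence strong $L^q$ for all finite $q$), and uniqueness of the limit recovers the full sequence. The weak-$\ast$ compactness step is precisely the idea your proposal is missing.
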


Finally, we obtain an $L^1$-contraction property for the obstacle
problem and an estimate for the stability of two coincidence sets
$I_1$ and $I_2$ in terms of their symmetric difference
$$I_1\div I_2:=(I_1\setminus I_2)\cup (I_2\setminus I_1).$$
The results were known for more regular solutions (\textit{cf.}
\cite{Ro87,Ro05}) but meet their natural and more general
formulation in the context of entropy solutions for data precisely
in $L^1(\Om)$.

\begin{thm}\label{thm6}
Assume \eqref{assumption1}--\eqref{assumptionp(x)}
and $p(\cdot)-1\ll q_1(\cdot)$.
Let $f_1$, $f_2\in L^1(\Om)$ and let $\psi$ satisfy
\eqref{assumptionf} and ${\mathcal A}\psi\in L^1(\Om)$. Let $u_1$
and $u_2$ be the entropy solutions of the obstacle problems
${\rm(}1.5{\rm})_{f_1,\psi}$ and ${\rm(}1.5{\rm})_{f_2,\psi}$,
respectively. If $\xi_i:=f_i-{\mathcal A}u_i$, $i=1,2$, then
\begin{equation}\label{L1}
\|\xi_1-\xi_2\|_1\leq \|f_1-f_2\|_1.
\end{equation}
If, in addition, the non-degeneracy condition
\begin{equation}\label{assumptionX}
f_i-{\mathcal A}\psi\leq-\lambda<0, \quad \textrm{a.e. in }D,\
i=1,2,
\end{equation}
holds in a measurable subset $D \subset \Om$, then, for
$I_i:=\{u_i=\psi\}$,
\begin{equation}
{\rm meas}\left((I_1\div I_2)\cap D\right)\leq \frac{1}{\lambda}
\|f_1-f_2\|_1.
\end{equation}
\end{thm}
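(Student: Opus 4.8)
The plan is to rewrite the two solutions in a ``complementarity form'' provided by Theorems \ref{thm3} and \ref{thm4}, to establish \eqref{L1} by a Br\'ezis--Strauss--Kato computation carried out on a variational approximation, and then to read off the coincidence-set estimate from \eqref{L1} and \eqref{assumptionX}. First, put $\mu_i:=-\xi_i=\mathcal{A}u_i-f_i$. Since $\mathcal{A}\psi\in L^1(\Om)$ and $p(\cdot)-1\ll q_1(\cdot)$, Theorem \ref{thm3} gives $\mathcal{A}u_i\in L^1(\Om)$ and $0\le\mu_i\le(\mathcal{A}\psi-f_i)^+$ a.e., so $\mu_i\in L^1(\Om)$, and Theorem \ref{thm4} together with the remark following it identifies $\mu_i=(\mathcal{A}\psi-f_i)^+\chi_{\{u_i=\psi\}}$; in particular $\mu_i\ge0$, $\mu_i=0$ a.e.\ on $\{u_i>\psi\}$, and $\mu_i=(\mathcal{A}\psi-f_i)^+$ a.e.\ on $I_i=\{u_i=\psi\}$. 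Thus \eqref{L1} is exactly the inequality $\|\mu_1-\mu_2\|_1\le\|f_1-f_2\|_1$.

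To prove it, I would approximate as in the proof of Theorem \ref{thm3} (see Proposition \ref{L1convergence}): choose a \emph{common} sequence of obstacles $\psi_n\to\psi$ in $W^{1,p(\cdot)}(\Om)$ with $\mathcal{A}\psi_n\to\mathcal{A}\psi$ in $L^1(\Om)$ and data $f_{i,n}\to f_i$ in $L^1(\Om)$, sufficiently regular that each obstacle problem for $\{f_{i,n},\psi_n\}$ has a variational solution $u_{i,n}\in\Wpx$ (with $u_{i,n}\ge\psi_n$), for which $\mathcal{A}u_{i,n}\in L^1(\Om)$, $\mu_{i,n}:=\mathcal{A}u_{i,n}-f_{i,n}=(\mathcal{A}\psi_n-f_{i,n})^+\chi_{\{u_{i,n}=\psi_n\}}$, and, by Proposition \ref{L1convergence}, $\xi_{i,n}:=-\mu_{i,n}\to\xi_i$ in $L^1(\Om)$. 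Subtracting the equations $\mathcal{A}u_{i,n}=f_{i,n}+\mu_{i,n}$, testing against the admissible function $w_\varepsilon:=\varepsilon^{-1}T_\varepsilon(u_{1,n}-u_{2,n})\in\Wpx\cap L^\infty(\Om)$ and discarding the gradient term, which is nonnegative by \eqref{assumption3}, gives
$$\int_\Om(\mu_{2,n}-\mu_{1,n})\,w_\varepsilon\,dx\le\int_\Om(f_{1,n}-f_{2,n})\,w_\varepsilon\,dx.$$
Since $|w_\varepsilon|\le1$ and $w_\varepsilon$ converges pointwise to $+1$, $-1$ or $0$ according as $u_{1,n}-u_{2,n}$ is positive, negative or zero, letting $\varepsilon\to0$ and using dominated convergence bounds the right side by $\int_{\{u_{1,n}\ne u_{2,n}\}}|f_{1,n}-f_{2,n}|$; on the left, since $u_{i,n}\ge\psi_n$ forces $\mu_{1,n}=0$ on $\{u_{1,n}>u_{2,n}\}$ and $\mu_{2,n}=0$ on $\{u_{1,n}<u_{2,n}\}$, the limiting integrand equals $|\mu_{1,n}-\mu_{2,n}|$ a.e.\ on $\{u_{1,n}\ne u_{2,n}\}$, whence $\int_{\{u_{1,n}\ne u_{2,n}\}}|\mu_{1,n}-\mu_{2,n}|\le\int_{\{u_{1,n}\ne u_{2,n}\}}|f_{1,n}-f_{2,n}|$. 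On $\{u_{1,n}=u_{2,n}\}$ one has $\mu_{1,n}=\mu_{2,n}=0$ where $u_{1,n}=u_{2,n}>\psi_n$ and $\mu_{i,n}=(\mathcal{A}\psi_n-f_{i,n})^+$ where $u_{1,n}=u_{2,n}=\psi_n$, so the $1$-Lipschitz continuity of $t\mapsto t^+$ gives $|\mu_{1,n}-\mu_{2,n}|\le|f_{1,n}-f_{2,n}|$ there too. Adding the two estimates yields $\|\mu_{1,n}-\mu_{2,n}\|_1\le\|f_{1,n}-f_{2,n}\|_1$, and passing to the limit (using $\xi_{i,n}\to\xi_i$ and $f_{i,n}\to f_i$ in $L^1$) proves \eqref{L1}.

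For the coincidence sets, note that on $D$ the non-degeneracy \eqref{assumptionX} reads $\mathcal{A}\psi-f_i\ge\lambda>0$, hence $(\mathcal{A}\psi-f_i)^+=\mathcal{A}\psi-f_i\ge\lambda$ on $D$, so by the structure recorded above $\mu_i\ge\lambda$ a.e.\ on $I_i\cap D$ and $\mu_i=0$ a.e.\ on $D\setminus I_i$. Therefore $|\mu_1-\mu_2|\ge\lambda$ a.e.\ on $(I_1\setminus I_2)\cap D$ and on $(I_2\setminus I_1)\cap D$, i.e.\ $\lambda\,\chi_{(I_1\div I_2)\cap D}\le|\mu_1-\mu_2|$ a.e.\ in $\Om$; integrating and using \eqref{L1},
$$\lambda\,{\rm meas}\bigl((I_1\div I_2)\cap D\bigr)\le\int_\Om|\mu_1-\mu_2|\,dx=\|\xi_1-\xi_2\|_1\le\|f_1-f_2\|_1,$$
which is the assertion.

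The main obstacle is the Br\'ezis--Strauss testing step: the natural test function $\varepsilon^{-1}T_\varepsilon(u_1-u_2)$ is not admissible for the entropy solutions themselves, since their weak gradients lie only in $L^{q(\cdot)}$ with $q(\cdot)\ll q_1(\cdot)<p(\cdot)$; this is exactly why the computation is performed on the variational approximation and then passed to the limit, and the limit passage rests on the $L^1$-convergence $\mathcal{A}u_{i,n}\to\mathcal{A}u_i$ of Proposition \ref{L1convergence}, which is where the hypothesis $p(\cdot)-1\ll q_1(\cdot)$ enters. The remaining case analysis and the coincidence-set estimate are elementary.
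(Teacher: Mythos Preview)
Your argument is correct and runs parallel to the paper's, but the bookkeeping is organized differently. The paper first establishes, via approximation and smoothed sign functions $\sigma_\varepsilon$, that
\[
\int_\Om(\mathcal{A}u_1-\mathcal{A}u_2)\,\varphi\,dx\ge0\quad\text{for every }\varphi\in\sigma(u_1-u_2),
\]
and then chooses a particular selection $\varphi$ from the maximal monotone graph $\sigma$ (taking the value $\mathrm{sign}(\xi_1-\xi_2)$ on $\{u_1=u_2\}$, which is legitimate by Lemma~\ref{lem2}) to turn this into the $L^1$-contraction~\eqref{L1}. You instead exploit the complementarity formula $\mu_{i,n}=(\mathcal{A}\psi_n-f_{i,n})^+\chi_{I_{i,n}}$ at the approximate level, handle the set $\{u_{1,n}=u_{2,n}\}$ directly via the $1$-Lipschitz property of $t\mapsto t^+$, and obtain $\|\mu_{1,n}-\mu_{2,n}\|_1\le\|f_{1,n}-f_{2,n}\|_1$ before passing to the limit. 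Your route avoids the maximal-monotone-graph device and the explicit appeal to Lemma~\ref{lem2}; the paper's route is slightly more general in that inequality~(5.3) is of independent interest. The coincidence-set estimate is handled identically in both arguments.

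One point needs tightening. You write that ``by Proposition~\ref{L1convergence}, $\xi_{i,n}\to\xi_i$ in $L^1(\Om)$'', but that proposition only gives $a(x,\nabla u_{i,n})\to a(x,\nabla u_i)$ strongly in $L^1$, hence $\mathcal{A}u_{i,n}\to\mathcal{A}u_i$ merely in $\mathcal{D}'(\Om)$; the divergence is not continuous from $L^1$ to $L^1$. What you actually have is \emph{weak} $L^1$-convergence of $\mu_{i,n}$: the Lewy--Stampacchia bounds $0\le\mu_{i,n}\le(\mathcal{A}\psi-f_{i,n})^+$ give equi-integrability, Dunford--Pettis yields a weak $L^1$ limit, and the distributional convergence above identifies it as $\mu_i$. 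This is exactly how the paper argues, and it suffices for your purpose since $\|\cdot\|_1$ is weakly lower semicontinuous, so
\[
\|\mu_1-\mu_2\|_1\le\liminf_n\|\mu_{1,n}-\mu_{2,n}\|_1\le\lim_n\|f_{1,n}-f_{2,n}\|_1=\|f_1-f_2\|_1.
\]
Also, there is no need to approximate the obstacle: taking $\psi_n\equiv\psi$ throughout (as the paper does) simplifies matters without loss.
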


\section{A priori estimates}\label{section2}

The main purpose of this section is to obtain \textit{a priori}
estimates in Marcin\-kiewicz spaces with variable exponent for an
entropy solution of the obstacle problem $\entropycond$. In face of
the embedding results of \cite{SU06}, we then derive \textit{a
priori} estimates in Lebesgue spaces with variable exponent. We
recall the definition of Macinkiewicz spaces with variable exponent
introduced in \cite{SU06}.

\begin{defn}
Let $q(\cdot)$ be a measurable function such that $\underline{q}>0$.
We say that a measurable function $u$ belongs to the
\textit{Marcinkiewicz space} $M^{q(\cdot)}(\Om)$ if there exists a
positive constant $M$ such that
$$\int_{\{|u|> t\}} t^{q(x)} \ dx\leq M,\quad \textrm{for all}\ \ t>0.$$
\label{defn:Marcinkiewicz}
\end{defn}

The following result is instrumental in obtaining \textit{a priori}
estimates for the obstacle problem.

\begin{lem}\label{lem3:2}
Assume \eqref{assumption1}--\eqref{assumptionf} and let
$\varphi\in\convex$. If $u$ is an entropy solution of the
variational inequality $\entropycond$ then
$$\int_{\{|u|\leq t\}}\!\!\!\!|\nabla u|^{p(x)} dx \leq
C\!\!\left(\!\!\left( t+\|\varphi\|_\infty \right)\|f\|_1+
\!\!\int_\Om\left(|\nabla\varphi|^{p(x)}+j(x)^{p'(x)}\right)
dx\right),$$ for all $t>0$, where $C$ is a constant depending only
on $\alpha$, $\gamma$ and $p(\cdot)$. \label{lem:cris}
\end{lem}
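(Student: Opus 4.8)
The plan is to use the test function $\varphi - T_k(u-\varphi)$ for a suitable $k$ in the entropy inequality $\entropycond$, where $\varphi \in \convex$ is the given obstacle-admissible function. The key observation is that $\varphi - u$ and the truncation $T_t(\varphi - u)$ let us isolate the integral over the set $\{|u|\le t\}$. First I would write $T_t(\varphi - u)$ and split the gradient integral $\int_\Om a(x,\nabla u)\cdot\nabla T_t(\varphi-u)\,dx$ over the region where $|\varphi - u|\le t$ (where $\nabla T_t(\varphi-u) = \nabla\varphi - \nabla u$) and its complement (where the gradient vanishes). On the former region, expand $a(x,\nabla u)\cdot(\nabla\varphi-\nabla u)$ and use the coercivity assumption \eqref{assumption1} to get the term $-\alpha\int |\nabla u|^{p(x)}$ over that region, moving $\int a(x,\nabla u)\cdot\nabla\varphi$ to the other side.

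The next step is to estimate $\int a(x,\nabla u)\cdot\nabla\varphi\,dx$ over $\{|\varphi - u|\le t\}$ using the growth assumption \eqref{assumption2}, namely $|a(x,\nabla u)|\le \gamma(j(x)+|\nabla u|^{p(x)-1})$, together with Young's inequality with variable exponents $p(x)$ and $p'(x)$. The term involving $|\nabla u|^{p(x)-1}|\nabla\varphi|$ is controlled by $\varepsilon |\nabla u|^{p(x)} + C_\varepsilon|\nabla\varphi|^{p(x)}$, and the term $j(x)|\nabla\varphi|$ is controlled by $j(x)^{p'(x)} + |\nabla\varphi|^{p(x)}$; choosing $\varepsilon$ small enough (depending on $\alpha$, $\gamma$, and on $\underline p, \overline p$) lets us absorb the $\varepsilon\int|\nabla u|^{p(x)}$ term into the coercive left-hand side. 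The right-hand side from the data, $\int f\, T_t(\varphi - u)\,dx$, is bounded trivially by $\|f\|_1\|T_t(\varphi-u)\|_\infty \le \|f\|_1(t + \|\varphi\|_\infty)$, since $|T_t(\varphi - u)|\le t$ and on the relevant set $|u|\le |\varphi| + t$ fails to help directly but $|\varphi - u|$ is what is truncated — more carefully, $|T_t(\varphi-u)|\le t$ always, so this term is at most $t\|f\|_1$; the $\|\varphi\|_\infty$ contribution enters when relating $\{|u|\le t\}$ to $\{|\varphi-u|\le t + \|\varphi\|_\infty\}$.

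The final bookkeeping step is to pass from the integral over $\{|\varphi-u|\le t\}$ to the desired integral over $\{|u|\le t\}$. Since $|\varphi|\le\|\varphi\|_\infty$ a.e., we have the inclusion $\{|u|\le t\}\subset\{|\varphi - u|\le t + \|\varphi\|_\infty\}$, so running the argument at level $t + \|\varphi\|_\infty$ instead of $t$ and discarding the (nonnegative, by monotonicity — or simply by coercivity bounding it below) gradient contributions on $\{t < |\varphi-u|\le t+\|\varphi\|_\infty\}$ gives
$$\int_{\{|u|\le t\}}|\nabla u|^{p(x)}\,dx \le \int_{\{|\varphi-u|\le t+\|\varphi\|_\infty\}}|\nabla u|^{p(x)}\,dx \le C\left((t+\|\varphi\|_\infty)\|f\|_1 + \int_\Om(|\nabla\varphi|^{p(x)} + j(x)^{p'(x)})\,dx\right),$$
which is the claimed estimate. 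The main obstacle I anticipate is the careful handling of Young's inequality with the variable exponents: the constants in $ab \le \varepsilon a^{p(x)} + C(\varepsilon, p(x)) b^{p'(x)}$ depend pointwise on $p(x)$, and one must verify they are uniformly bounded (using $1 < \underline p \le \overline p < N$) so that a single choice of $\varepsilon$ works to absorb the $\int|\nabla u|^{p(x)}$ term; one should also check the test function $\varphi - T_k(u-\varphi)$ is legitimately in $\convex$, which follows since $\varphi - T_k(u-\varphi)\ge\varphi - (u-\varphi)^+ $... more simply, on $\{u\ge\varphi\}$ it lies between $\varphi$ and $u$ hence $\ge\psi$, and the $L^\infty\cap W^{1,p(\cdot)}_0$ membership is inherited from $\varphi$ and $T_k$.
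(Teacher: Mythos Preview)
Your argument is essentially the paper's: take $\varphi$ in $\entropycond$, use coercivity \eqref{assumption1}, the growth bound \eqref{assumption2}, and Young's inequality to control $\int_{\{|u-\varphi|\le t\}}|\nabla u|^{p(x)}\,dx$, then replace $t$ by $t+\|\varphi\|_\infty$ and use the inclusion $\{|u|\le t\}\subset\{|u-\varphi|\le t+\|\varphi\|_\infty\}$. One clarification: in the entropy formulation the test function is simply $\varphi\in\convex$, and $T_t(\varphi-u)$ appears in the integrand by definition, so your final admissibility check for ``$\varphi-T_k(u-\varphi)$'' is unnecessary.
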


\begin{proof}
Take $\varphi\in \convex$ in the variational inequality
$\entropycond$ to obtain
\begin{eqnarray}\label{k01}
\int_{\{|u-\varphi|\leq t\}}a(x,\nabla u)\cdot \nabla (u-\varphi) \
dx \leq \int_\Om f\: T_t(u-\varphi)\ dx\leq \|f\|_1t,
\end{eqnarray}
for all $t>0$. On the other hand, using assumptions
\eqref{assumption1}--\eqref{assumption2} and Young's inequality, we
have, for all $t>0$,
\begin{eqnarray}\label{k02}
&\ &\hspace{-1cm}\int_{\{|u-\varphi|\leq t\}}a(x,\nabla u)\cdot
\nabla (u-\varphi) \
dx\nonumber\\
&\ &\geq \alpha\int_{\{|u-\varphi|\leq t\}}|\nabla u|^{p(x)}\ dx-
\gamma\int_{\{|u-\varphi|\leq t\}}(j(x)+|\nabla
u|^{p(x)-1})|\nabla\varphi|\ dx\nonumber\\
&\ &\geq \frac{\alpha}{2}\int_{\{|u-\varphi|\leq t\}}|\nabla
u|^{p(x)}\
dx-C\int_\Om\left(|\nabla\varphi|^{p(x)}+j(x)^{p'(x)}\right)\ dx,
\end{eqnarray}
where $C$, here and in the rest of the proof, is a constant
depending only on $\alpha$,
$\gamma$,
and $p(\cdot)$. Now, from \eqref{k01} and \eqref{k02}, we obtain
$$
\int_{\{|u-\varphi|\leq t\}}|\nabla u|^{p(x)}\ dx\leq
\frac{2\|f\|_1t}{\alpha}+C\int_\Om\left(|\nabla
\varphi|^{p(x)}+|j(x)|^{p'(x)}\right)\ dx,
$$
for all $t>0$. Replacing $t$ with $t+\|\varphi\|_\infty$ in the last
inequality, we get
\begin{eqnarray*}
\int_{\{|u|\leq t\}}|\nabla u|^{p(x)}\
dx&\leq&\int_{\{|u-\varphi|\leq
t+\|\varphi\|_\infty\}}|\nabla u|^{p(x)}\ dx\nonumber\\
&\hspace{-4.7cm}\leq&\hspace{-2.3cm}
C\left((t+\|\varphi\|_\infty)\|f\|_1+
\int_\Om\left(|\nabla\varphi|^{p(x)}+j(x)^{p'(x)}\right)\ dx\right),
\end{eqnarray*}
for all $t>0$.
\end{proof}

In the next result we prove \textit{a priori} estimates for an
entropy solution of $\entropycond$ in Marcinkiewicz spaces with
variable exponent. The proof is based on Lemma \ref{lem:cris} and
Sobolev inequality.

\begin{prop}\label{prop3:3}
Assume \eqref{assumption1}--\eqref{assumptionf} and let
$\varphi\in\convex$. If $u$ is an entropy solution of the
variational inequality $\entropycond$ then the following assertions
hold:

\begin{itemize}

\item[{\rm(}i{\rm)}]
There exists a positive constant $M$, depending only on $\alpha$,
$\gamma$, $N$, $p(\cdot)$, and $\Om$, such that

$$\hspace*{-7.5cm}\int_{\{|u|> t\}}\hspace{-0.5cm}t^{p^*(x)/\overline{p'}} dx$$
$$\leq
M\left((1+\|\varphi\|_\infty)\|f\|_1+\int_\Om\left(|j(x)|^{p'(x)}
+|\nabla\varphi|^{p(x)}\right)
dx+1\right)^{\overline{p^*}/\underline{p'}},
$$
for all $t>0$.

\item[{\rm(}ii{\rm)}] If there exists a positive constant $M$ such that
\begin{equation}
\int_{\{|u|> t\}}t^{q(x)}\ dx\leq M,\quad \textrm{for all }t>0,
\label{cris1}
\end{equation}
then $|\nabla u|^{r(\cdot)}\in M^{q(\cdot)}(\Om)$, where
$r(\cdot):=p(\cdot)/(q(\cdot)+1)$. Moreover, there exists a constant
$C$, depending only on $\alpha$, $\gamma$, and $p(\cdot)$, such that
$$\hspace*{-8.5cm} \int_{\left\{ |\nabla u|^{r(\cdot)} > t
\right\}}\hspace{-0.5cm}t^{q(x)}\ dx$$
$$\leq C\left((1+\|\varphi\|_\infty)\|f\|_1
+\int_\Om\left(|\nabla\varphi|^{p(x)}+|j(x)|^{p'(x)}\right)dx\right)+M+|\Om|,
$$
for all $t>0$.
\end{itemize}
\label{lem:u_nbounds}
\end{prop}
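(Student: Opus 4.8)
The plan is to prove the two parts in sequence, using Lemma~\ref{lem:cris} as the fundamental input and then mimicking the classical Marcinkiewicz estimates of \cite{BBGGPV95}, adapted to the variable exponent by keeping track of the worst exponents $\underline{p},\overline{p},\underline{p'},\overline{p^*}$ wherever H\"older or Sobolev inequalities are applied.

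For part (i), the idea is to start from the Sobolev embedding $W_0^{1,p(\cdot)}(\Om)\hookrightarrow L^{p^*(\cdot)}(\Om)$ applied to the truncation $T_t(u)\in\Wpx$, which is legitimate for every $t>0$ by the definition of entropy solution. Combining the modular form of the Sobolev inequality with Lemma~\ref{lem:cris} (which bounds $\int_{\{|u|\le t\}}|\nabla u|^{p(x)}\,dx$ linearly in $t$), one controls $\varrho_{p^*(\cdot)}(T_t(u))$, and hence $\int_{\{|u|>t\}}t^{p^*(x)}\,dx$, by a constant times $\big((1+\|\varphi\|_\infty)\|f\|_1+\int_\Om(|j|^{p'(x)}+|\nabla\varphi|^{p(x)})\,dx+1\big)$ raised to a power coming from the passage between modular and norm (this is where $\overline{p^*}/\underline{p'}$ appears). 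The division by $\overline{p'}$ in the exponent $p^*(x)/\overline{p'}$ is the standard trick of replacing $t^{p^*(x)}$ by $t^{p^*(x)/\overline{p'}}$ to absorb a factor $t^{p^*(x)(1-1/\overline{p'})}$ which is bounded on the relevant range of $t$ — more precisely one splits into $t\le 1$ and $t>1$, and on $t\le 1$ the estimate is trivial since then $t^{p^*(x)/\overline{p'}}\le 1$. I would optimize the choice of the "level" used in Lemma~\ref{lem:cris} (e.g. take the level to be a dyadic multiple of $t$, or simply $t$ itself) to obtain the clean linear-in-$t$ bound before raising to the power.

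For part (ii), the approach is the now-classical argument bounding $|\nabla u|^{r(\cdot)}$ on superlevel sets. Fix $t>0$ and split $\{|\nabla u|^{r(x)}>t\}$ according to whether $|u|\le s$ or $|u|>s$ for a cut-off $s$ to be chosen (typically $s$ comparable to a power of $t$, chosen so that the two contributions balance). On $\{|u|\le s\}$ one uses Chebyshev and Lemma~\ref{lem:cris}: $\int_{\{|\nabla u|^{r(x)}>t,\,|u|\le s\}}t^{q(x)}\,dx\le \int_{\{|u|\le s\}}|\nabla u|^{r(x)q(x)}\,dx\le\int_{\{|u|\le s\}}|\nabla u|^{p(x)}\,dx$ once we recall $r(x)(q(x)+1)=p(x)$, so $r(x)q(x)=p(x)-r(x)\le p(x)$ — here one must be careful that $|\nabla u|^{r(x)q(x)}\le |\nabla u|^{p(x)}$ only where $|\nabla u|\ge 1$, handling the set $\{|\nabla u|<1\}$ by the trivial bound $t^{q(x)}\le 1$ contributing a term $|\Om|$; this explains the $+|\Om|$ in the statement. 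On $\{|u|>s\}$ one simply uses hypothesis \eqref{cris1}, contributing $M$ (again after choosing $s$ so that $s^{q(x)}$ dominates $t^{q(x)}$ on that piece, or after a comparable-level argument), which explains the $+M$. Collecting the pieces with the optimal $s$ yields the asserted bound. That $|\nabla u|^{r(\cdot)}\in M^{q(\cdot)}(\Om)$ is then immediate from Definition~\ref{defn:Marcinkiewicz}.

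The main obstacle I anticipate is purely book-keeping with the variable exponent: the transitions between the modular $\varrho_{p^*(\cdot)}$ and the norm $\|\cdot\|_{p^*(\cdot)}$ introduce exponents that depend on whether the quantity is $\le 1$ or $>1$, so one must systematically split every estimate into the ranges $t\le 1$ and $t>1$ (and similarly for the functions involved being $\le 1$ or $>1$ pointwise), and then bound the variable power $p^*(x)/\overline{p'}$ uniformly by $\overline{p^*}/\underline{p'}$ or $\underline{p^*}/\overline{p'}$ as needed. Another delicate point is ensuring that the constants $M$ and $C$ depend only on the stated data ($\alpha,\gamma,N,p(\cdot),\Om$) and not on $u$, $\varphi$, $f$ or $t$; this is automatic as long as one only ever invokes Lemma~\ref{lem:cris}, the Sobolev inequality, and elementary inequalities, never any estimate involving $u$ itself beyond what Lemma~\ref{lem:cris} provides. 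No genuinely new idea beyond \cite{BBGGPV95,SU06} is required; the work is in making the classical truncation argument robust under the log-H\"older variable exponent.
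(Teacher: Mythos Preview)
Your plan for part (i) is essentially the paper's approach: Sobolev embedding applied to $T_t(u)$, combined with the linear-in-$t$ bound of Lemma~\ref{lem:cris}, and the split $t\le 1$ versus $t>1$. The paper packages the modular Sobolev step by quoting Lemma~2.3 of \cite{SU06}, but what you describe amounts to the same thing.

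In part (ii), however, there is a genuine gap. The Chebyshev inequality you write,
\[
\int_{\{|\nabla u|^{r(x)}>t,\ |u|\le s\}} t^{q(x)}\,dx \;\le\; \int_{\{|u|\le s\}} |\nabla u|^{r(x)q(x)}\,dx \;\le\; \int_{\{|u|\le s\}} |\nabla u|^{p(x)}\,dx,
\]
is valid but too weak: Lemma~\ref{lem:cris} bounds the right-hand side by $C(s+K)$, which grows linearly in $s$. To control the second piece $\int_{\{|u|>s\}}t^{q(x)}\,dx$ by $M$ via the hypothesis \eqref{cris1} you need $s\ge t$ (at least for $t\ge 1$), and then the first piece is $\ge Ct$, unbounded in $t$. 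No ``optimal'' choice of $s$ rescues this: if $s$ is small the second piece blows up, if $s$ is large the first does.

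What you are missing is that the definition $r(\cdot)=p(\cdot)/(q(\cdot)+1)$ is engineered so that Chebyshev with exponent $q(x)+1=p(x)/r(x)$, not $q(x)$, produces a factor $t^{-1}$:
\[
t^{q(x)} \;\le\; t^{q(x)}\Big(\tfrac{|\nabla u|^{r(x)}}{t}\Big)^{p(x)/r(x)} \;=\; t^{q(x)-(q(x)+1)}\,|\nabla u|^{p(x)} \;=\; \tfrac{1}{t}\,|\nabla u|^{p(x)}
\]
on $\{|\nabla u|^{r(x)}>t\}$. Taking $s=t$ then gives $\frac{1}{t}\int_{\{|u|\le t\}}|\nabla u|^{p(x)}\,dx\le C(1+K/t)\le C(1+K)$ for $t\ge 1$, which is the uniform bound you want. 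The $+|\Om|$ then comes simply from the trivial estimate for $t\le 1$, not from the region $\{|\nabla u|<1\}$ as you suggest.
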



\begin{proof}
(\textit{i}) We proceed as in the proof of Proposition~3.2 in
\cite{SU06}, sketching here only the main steps (we refer to
\cite{SU06} for a complete account of the details). {F}rom Lemma
\ref{lem:cris}, we have
\begin{equation}\label{pepe4bis}
\frac{1}{t}\int_\Om |\nabla T_t(u)|^{p(x)}\ dx\leq
M_1+\frac{M_2}{t},
\end{equation}
for all $t>0$, where $M_1:=C_1\|f\|_1$ and
$$M_2:=C_1\left(\|f\|_1\|\varphi\|_\infty+
\int_\Om\left(|\nabla\varphi|^{p(x)}+j(x)^{p'(x)}\right)\ dx\right),
$$
for a constant $C_1$, depending only on $\alpha$, $\gamma$ and
$p(\cdot)$. On the other hand, using Lemma 2.3 in \cite{SU06} and
Sobolev's inequality, we estimate
\begin{eqnarray}
\int_{\{|u|> t\}}t^{p^*(x)/\overline{p'}}\ dx \leq C_2\left(
\int_\Om t^{p(x)/\overline{p'}+1-p(x)}\frac{1}{t}|\nabla
T_t(u)|^{p(x)}\ dx
+1\right)^{\overline{p^*}/\underline{p}}\!\!\!\!\!\!, \label{pepe1}
\end{eqnarray}
where $C_2$ is a constant depending only on $N$, $p(\cdot)$, and
$\Omega$.

Noting that $t^{p(x)/\overline{p'}+1-p(x)}\leq 1$ for all $t\geq 1$,
and using \eqref{pepe4bis}, we obtain
$$
\int_{\{|u|> t\}}t^{p^*(x)/\overline{p'}}\ dx \leq C_2\left(
M_1+M_2+1\right)^{\overline{p^*}/\underline{p}},
$$
for all $t\geq1$. For $0<t<1$, we have
$$\int_{\{|u|> t\}}t^{p^*(x)/\overline{p'}}\ dx\leq |\Omega|.$$
Combining both estimates and using the definition of $M_1$ and
$M_2$, we prove the result after simple estimates.

(\textit{ii}) Using \eqref{cris1}, the definition of $r(\cdot)$, and
\eqref{pepe4bis}, we have
\begin{eqnarray*}
\int_{\{ |\nabla u|^{r(x)} > t \} }t^{q(x)}\ dx & \leq & \int_{\{
|\nabla u|^{r(x)} > t \}\cap\{|u|\leq t\}}
t^{q(x)}\ dx+ \int_{\left\{ |u|> t \right\}}t^{q(x)}\ dx\\
&\hspace{-7cm} \leq &\hspace{-3.5cm} \int_{\{|u|\leq
t\}}t^{q(x)}\left(\frac{|\nabla
u|^{r(x)}}{t}\right)^{p(x)/r(x)} dx+M\\
&\hspace{-7cm} = &\hspace{-3.5cm}\frac{1}{t}\int_{\{|u|\leq
t\}}|\nabla
T_t(u)|^{p(x)}\ dx+M\\
&\hspace{-7cm} \leq
&\hspace{-3.5cm}C\left((1+\|\varphi\|_\infty)\|f\|_1
+\int_\Om\left(|\nabla\varphi|^{p(x)}+|j(x)|^{p'(x)}\right)dx\right)
+M,
\end{eqnarray*}
for all $t\geq 1$, where $C$ is a constant depending only on
$\alpha$ and $p(\cdot)$. Noting that
$$\int_{\{ |\nabla u|^{r(x)} > t \} }t^{q(x)}\ dx\leq
|\Om|,\qquad \textrm{for all }t\leq 1,$$ we conclude the proof.
\end{proof}

Using Proposition \ref{lem:u_nbounds} and Proposition~2.5 in
\cite{SU06} one obtains the following result (see the proofs of
Corollaries 3.5 and 3.7 in \cite{SU06}).

\begin{cor}\label{cor1}
Assume \eqref{assumption1}--\eqref{assumptionf}. Let
\begin{equation}\label{q_0andq_1}
q_0(\cdot) =\frac{p^*(\cdot)}{\overline{p'}}\quad \textrm{and}\quad
q_1(\cdot) =\frac{q_0(\cdot)}{q_0(\cdot)+1}p(\cdot).
\end{equation}
If $u$ is an entropy solution of the variational inequality
$\entropycond$, then there exists a constant $C$, which is
independent of $u$, such that
\begin{equation}\label{L^q_bound}
\int_\Om |u|^{q(x)} \ dx \leq C,\quad \textrm{for all }0 \ll
q(\cdot) \ll q_0(\cdot),
\end{equation}
and
\begin{equation}\label{grad_bound}
\int_\Om |\nabla u|^{q(x)} \ dx \leq C, \quad \textrm{for all }0 \ll
q(\cdot) \ll q_1(\cdot).
\end{equation}
In particular, $|u|^{q(\cdot)} \in L^1 (\Om)$, for all $q(\cdot)$
such that $0 \ll q(\cdot) \ll q_0(\cdot)$, and $|\nabla
u|^{q(\cdot)} \in L^{1} (\Om)$, for all $q(\cdot)$ such that $0 \ll
q(\cdot) \ll q_1(\cdot)$.
\end{cor}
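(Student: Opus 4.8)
\textbf{Proof proposal for Corollary \ref{cor1}.}

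The plan is to extract the two integrability statements from the Marcinkiewicz estimates of Proposition \ref{lem:u_nbounds} by invoking the embedding of variable-exponent Marcinkiewicz spaces into variable-exponent Lebesgue spaces (Proposition~2.5 of \cite{SU06}). First, I would fix $\varphi\in\convex$; such a $\varphi$ exists because \eqref{assumptionf} guarantees $\psi^+\in\Wpx\cap L^\infty(\Om)$, so the right-hand sides of the estimates in Proposition \ref{lem:u_nbounds} are finite constants depending only on $\alpha,\gamma,N,p(\cdot),\Om$ and on the fixed data $f,\psi$ — in particular independent of $u$. Part (i) of Proposition \ref{lem:u_nbounds} says precisely that $u\in M^{q_0(\cdot)}(\Om)$ with $q_0(\cdot)=p^*(\cdot)/\overline{p'}$, and the quantitative bound there will feed directly into the Marcinkiewicz-to-Lebesgue embedding.

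Next I would apply the embedding result: if $v\in M^{q(\cdot)}(\Om)$ and $0\ll s(\cdot)\ll q(\cdot)$, then $|v|^{s(\cdot)}\in L^1(\Om)$, with a norm bound controlled by the Marcinkiewicz constant. Applied to $v=u$ and $q(\cdot)=q_0(\cdot)$, this yields \eqref{L^q_bound}: for every $s(\cdot)$ with $0\ll s(\cdot)\ll q_0(\cdot)$ one has $\int_\Om|u|^{s(x)}\,dx\le C$, with $C$ independent of $u$. Then, to reach the gradient estimate, I would first note that by part (i) the hypothesis \eqref{cris1} of part (ii) holds with $q(\cdot)$ any exponent $\ll q_0(\cdot)$ (taking $q(\cdot)$ slightly below $q_0(\cdot)$ and using that the Marcinkiewicz constant in (i) is uniform). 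For such $q(\cdot)$, part (ii) gives $|\nabla u|^{r(\cdot)}\in M^{q(\cdot)}(\Om)$ with $r(\cdot)=p(\cdot)/(q(\cdot)+1)$ and a uniform constant, so applying the embedding once more (this time with exponent $q(\cdot)$ and power $s(\cdot)\ll q(\cdot)$) we get $|\nabla u|^{r(\cdot)s(\cdot)}\in L^1(\Om)$. Writing $\tilde q(\cdot)=r(\cdot)s(\cdot)=\frac{s(\cdot)}{q(\cdot)+1}p(\cdot)$ and letting $q(\cdot)\uparrow q_0(\cdot)$ and $s(\cdot)\uparrow q(\cdot)$, one sees that $\tilde q(\cdot)$ can be taken arbitrarily close to $\frac{q_0(\cdot)}{q_0(\cdot)+1}p(\cdot)=q_1(\cdot)$ from below, which gives \eqref{grad_bound} for all $0\ll\tilde q(\cdot)\ll q_1(\cdot)$, and in particular $|\nabla u|^{\tilde q(\cdot)}\in L^1(\Om)$ for all such $\tilde q(\cdot)$.

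The main obstacle, and the point requiring care, is the bookkeeping of the two successive ``$\ll$'' passages and the continuity/monotonicity of the map $(q(\cdot),s(\cdot))\mapsto \frac{s(\cdot)}{q(\cdot)+1}p(\cdot)$: one must check that its supremum over the admissible region, as $q(\cdot)\to q_0(\cdot)$ and $s(\cdot)\to q(\cdot)$, really is $q_1(\cdot)$ in the $\ll$ sense (i.e. that any exponent strictly below $q_1(\cdot)$ is reached), and that at each stage the Marcinkiewicz constants stay bounded uniformly in $u$ — this is where the uniformity built into Proposition \ref{lem:u_nbounds} is essential. Since this is exactly the computation carried out in Corollaries 3.5 and 3.7 of \cite{SU06} for the equation case, and the only difference here is the presence of $\|\varphi\|_\infty$ and $\|\nabla\varphi\|$-terms on the right-hand side (which are harmless constants once $\varphi$ is fixed), the argument transfers verbatim, and I would simply refer to those proofs for the remaining routine estimates.
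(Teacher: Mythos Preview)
Your proposal is correct and follows exactly the route the paper takes: the paper's own proof consists of the single sentence ``Using Proposition \ref{lem:u_nbounds} and Proposition~2.5 in \cite{SU06} one obtains the following result (see the proofs of Corollaries 3.5 and 3.7 in \cite{SU06})'', and you have spelled out precisely that argument, including the concluding reference to \cite{SU06}. One small simplification: part (i) of Proposition \ref{lem:u_nbounds} gives hypothesis \eqref{cris1} directly with $q(\cdot)=q_0(\cdot)$, so you may apply part (ii) at $q_0(\cdot)$ itself rather than at an exponent strictly below it; this removes the double limiting $q(\cdot)\uparrow q_0(\cdot)$, $s(\cdot)\uparrow q(\cdot)$ and the bookkeeping you flag, since then $r(\cdot)s(\cdot)=\dfrac{s(\cdot)}{q_0(\cdot)+1}\,p(\cdot)$ ranges exactly over all exponents $\ll q_1(\cdot)$ as $s(\cdot)$ ranges over exponents $\ll q_0(\cdot)$.
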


%
%

\section{Existence and uniqueness of entropy solutions}
\label{section3}

In this section we prove the existence and uniqueness of an entropy
solution to the obstacle problem $\entropycond$. We also prove the
continuous dependence of the solution with respect to the
right--hand side $f$ and the obstacle $\psi$.

We start by proving that a sequence $\{u_n\}_n$ of entropy solutions
of the obstacle problems $\entropycondn$ converges in measure to a
measurable function $u$. We also show that the sequence of weak
gradients $\{\nabla u_n\}_n$ converges in measure to $\nabla u$, the
weak gradient of $u$. Finally, we prove some regularity properties
using Proposition \ref{lem:u_nbounds} and Corollary \ref{cor1}.

\begin{prop}\label{prop:key}
Let $\{f_n,\psi_n\}_n$ be a sequence in $L^1(\Om)\times
W^{1,p(\cdot)}(\Om)$. Assume
\eqref{assumption1}--\eqref{assumptionf} and that ${\psi_n}^+
\in\Wpx \cap L^\infty (\Om)$, for all $n$. Let $u_n$ be an entropy
solution of the obstacle problem ${\rm(}1.5{\rm})_{f_n,\psi_n}$. If
\begin{equation}\label{ffn}
f_n  \longrightarrow f \quad \textrm{in }L^1(\Om) \qquad and \qquad
\psi_n \longrightarrow \psi \quad \textrm{in }W^{1,p(\cdot)}(\Om),
\end{equation}
then the following assertions hold:

\begin{description}

\item[{\rm(}i{\rm)}] There exists a measurable function $u$ such that
$u_n\rightarrow u$ in measure.

\item[{\rm(}ii{\rm)}] $\nabla u_n$ converges in measure to $\nabla u$, the weak gradient of $u$.
\end{description}

\end{prop}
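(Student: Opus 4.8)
\textbf{Proof proposal for Proposition \ref{prop:key}.}
The plan is to follow the classical Bénilan \emph{et al.} strategy (as adapted to the variable exponent setting in \cite{SU06}), with the obstacle handled via the admissible test function $\varphi=\psi_n^+$, which is bounded uniformly in $n$ because of \eqref{ffn} and the continuity of the positive-part map in $W^{1,p(\cdot)}(\Om)$. First I would record that, by Lemma \ref{lem:cris} and Proposition \ref{lem:u_nbounds} applied with $\varphi=\psi_n^+$, the bounds there depend on $\|f_n\|_1$, $\|\psi_n^+\|_\infty$, $\varrho_{p(\cdot)}(\nabla\psi_n)$ and $\varrho_{p'(\cdot)}(j)$ only; by \eqref{ffn} these quantities are bounded uniformly in $n$, so Corollary \ref{cor1} gives a uniform bound $\int_\Om|\nabla u_n|^{q(x)}\,dx\le C$ for every $0\ll q(\cdot)\ll q_1(\cdot)$ and, likewise, a uniform bound on $\int_\Om|u_n|^{q(x)}\,dx$ for $0\ll q(\cdot)\ll q_0(\cdot)$, together with the Marcinkiewicz estimates $u_n\in M^{q_0(\cdot)}(\Om)$ uniformly.

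To prove (i) I would show that $\{u_n\}_n$ is a Cauchy sequence in measure. The Marcinkiewicz bound on $u_n$ controls $\mathrm{meas}\{|u_n|>t\}$ uniformly, so for any $\eta>0$ one can choose $t$ large, reducing matters to estimating $\mathrm{meas}\{|u_n-u_m|>\sigma,\ |u_n|\le t,\ |u_m|\le t\}$. Following \cite{SU06}, one tests $\entropycondn$ for $u_n$ with $\varphi=\psi_n^++T_k(u_m-\psi_m^+)^{\pm}$-type functions — more precisely one uses the standard device of adding the two inequalities for $u_n$ (test with something built from $T_t(u_m)$) and for $u_m$ (test with something built from $T_t(u_n)$), both admissible after truncation since $\psi_n^+,\psi_m^+\in\convex[n],\convex[m]$. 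The monotonicity assumption \eqref{assumption3}, the growth bounds \eqref{assumption1}–\eqref{assumption2}, and the strong convergences $f_n\to f$ in $L^1$, $\psi_n\to\psi$ in $W^{1,p(\cdot)}$ then yield $\int_{\{|u_n-u_m|\le t\}}\big(a(x,\nabla u_n)-a(x,\nabla u_m)\big)\cdot(\nabla u_n-\nabla u_m)\,dx\to 0$; via the by-now-standard argument one converts this into convergence in measure of both $u_n$ and $\nabla u_n$ on the truncation sets. The obstacle terms are controlled because the test functions differ from the plain truncations only by $\psi_n^+-\psi_m^+\to 0$ in $W^{1,p(\cdot)}$, which, paired with the uniform $L^{p(\cdot)}$ bound on $a(x,\nabla u_n)$ coming from \eqref{assumption2} and Hölder's inequality for variable exponents, produces an error tending to zero. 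This gives a limit $u$ measurable with $u\ge\psi$ a.e. (the inequality passes to the limit in measure), and $T_t(u_n)\to T_t(u)$ in measure for every $t$.

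For (ii) I would first use the uniform bound \eqref{grad_bound} to get, for a fixed $q(\cdot)$ with $0\ll q(\cdot)\ll q_1(\cdot)$, a (not relabelled) subsequence with $\nabla u_n\rightharpoonup \mathbf v$ weakly in $L^{q(\cdot)}(\Om)^N$ for some measurable $\mathbf v$; since $T_t(u_n)$ is bounded in $W_0^{1,p(\cdot)}(\Om)$ uniformly in $n$ and converges in measure (hence, along a subsequence, weakly in $W_0^{1,p(\cdot)}(\Om)$) to $T_t(u)$, one identifies $\mathbf v\chi_{\{|u|<t\}}=\nabla T_t(u)$ a.e. for every $t$, so $\mathbf v$ is precisely the weak gradient of $u$ in the sense recalled before Theorem \ref{thm1}. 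To upgrade weak convergence to convergence in measure, I would run the standard Boccardo–Murat / Minty-type almost-everywhere convergence argument for the gradients: from the ``$(a(x,\nabla u_n)-a(x,\nabla u_m))\cdot(\nabla u_n-\nabla u_m)\to 0$ in $L^1$ on truncation sets'' established in step (i), together with the strict monotonicity \eqref{assumption3}, one extracts $\nabla u_n\to\nabla u$ a.e. in $\Om$ (this is the classical consequence of a uniformly integrable, nonnegative, a.e.-vanishing integrand built from a strictly monotone operator), and a.e. convergence implies convergence in measure on the bounded domain $\Om$. The uniformity in $n$ of all constants, provided by \eqref{ffn} via Corollary \ref{cor1}, makes the whole scheme work without passing to a subsequence once the limit is shown unique.

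\textbf{Main obstacle.} The delicate point is the estimation of the mixed obstacle terms when adding the two entropy inequalities: because $u_n\notin{\cal K}_{\psi_n}$ in general, the test functions must be truncated, and one has to verify carefully that $\psi_n^++T_t(\cdot)$-type combinations are genuinely admissible in $\convex[n]$ and that the discrepancy $\psi_n^+-\psi_m^+$ contributes only an $o(1)$ error — this is exactly where the strong $W^{1,p(\cdot)}$-convergence of $\psi_n$ and the uniform $L^{p'(\cdot)}$-bound on $a(x,\nabla u_n)$ (from \eqref{assumption2} and the uniform gradient estimates) are both essential, and where the variable-exponent Hölder inequality must be invoked with care since these spaces are not translation invariant.
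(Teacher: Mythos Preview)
Your overall strategy is sound, but for part (i) you work harder than the paper does, and in doing so you import a difficulty that is not actually needed there. The paper's argument for (i) is purely compactness-based: once you have the uniform bound $\int_\Om |\nabla T_t(u_n)|^{p(x)}\,dx\le C(t)$ from Lemma~\ref{lem3:2} (applied with $\varphi_n=\psi^+ + (\psi_n-\psi^+)^+\in\convexn$), the compact Sobolev embedding gives, along a subsequence, $T_t(u_n)\to T_t(u)$ strongly in $L^{q(\cdot)}$ for every $q(\cdot)\ll p^*(\cdot)$; combined with the uniform Marcinkiewicz tail estimate on $\mathrm{meas}\{|u_n|>t\}$ this yields Cauchy in measure directly, via the three-set decomposition
\[
\mathrm{meas}\{|u_n-u_m|>s\}\le \mathrm{meas}\{|u_n|>t\}+\mathrm{meas}\{|u_m|>t\}+\mathrm{meas}\{|T_t(u_n)-T_t(u_m)|>s\}.
\]
No cross-testing of the two entropy inequalities is required for (i). Your scheme of adding the inequalities for $u_n$ and $u_m$ with test functions of the form $\psi_n^+ + T_k(u_m-\psi_m^+)^{\pm}$ is the device one uses for \emph{gradients} (part (ii), and uniqueness), and even there only the ``$+$'' variant is admissible in $\convexn$; the ``$-$'' variant need not sit above $\psi_n$. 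Reserving the monotonicity argument for (ii), as the paper does (by deferring to Proposition~5.3 of \cite{SU06}), keeps the two steps cleanly separated.

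Two points of precision. First, your justification that $\|\psi_n^+\|_\infty$ is uniformly bounded ``by continuity of the positive-part map in $W^{1,p(\cdot)}$'' is not valid when $\overline p<N$: convergence in $W^{1,p(\cdot)}$ carries no $L^\infty$ information. The paper does not use $\psi_n^+$ itself as the test function but rather $\varphi_n=\psi^+ + (\psi_n-\psi^+)^+=\max(\psi^+,\psi_n)$, whose $L^\infty$ bound is tied to $\|\psi^+\|_\infty$ and $\sup\psi_n$. Second, you write ``uniform $L^{p(\cdot)}$ bound on $a(x,\nabla u_n)$'': you mean $L^{p'(\cdot)}$, and even that holds only for the truncated fields $a(x,\nabla T_s(u_n))$ (which is what actually appears, since all integrals live on sets of the form $\{|u_n|\le s\}$). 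The full $a(x,\nabla u_n)$ is in general only in $L^{q(\cdot)}$ for $q(\cdot)\ll q_1(\cdot)/(p(\cdot)-1)$, which is why the extra hypothesis $p(\cdot)-1\ll q_1(\cdot)$ enters later (Proposition~\ref{L1convergence}) but is \emph{not} needed here.
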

\begin{proof}
Let $\varphi\in\convex$, \textit{e.g.} $\varphi=\psi^+$, and note
that $\varphi_n:=\varphi+(\psi_n-\varphi)^+\in L^\infty(\Om)$ since
$\varphi\in L^\infty(\Om)$ and $\psi_n$ is bounded above (see
Remark~\ref{rem:thm1}). In particular, $\varphi_n\in\convexn$.
Moreover, by \eqref{ffn}, there exists a constant $C$, independent
of $n$, such that
\begin{equation}\label{lalarito}
\|f_n\|_1\leq C(\|f\|_1+1),\qquad \|\varphi_n\|_\infty \leq
C\left(\|\varphi\|_\infty+1\right),
\end{equation}
and
\begin{equation}\label{lalarito2}
\int_\Om|\nabla\varphi_n|^{p(x)}\ dx\leq
C\left(\int_\Om|\nabla\varphi|^{p(x)}\ dx+1\right),\quad \mbox{for
all }n.
\end{equation}

\medskip

\noindent (\textit{i}) Let $s$, $t$, and $\varepsilon$ be positive
numbers. Noting that
\begin{eqnarray}
\textrm{meas}\: \{|u_n-u_m|>s\}&\leq &\textrm{meas}\: \{|u_n|>t\}
+\textrm{meas}\:\{|u_m|>t\}\nonumber\\&+&
\textrm{meas}\:\{|T_t(u_n)-T_t(u_m)|>s\}, \label{measu1}
\end{eqnarray}
from Proposition \ref{lem:u_nbounds}(\textit{i}) and
\eqref{lalarito}--\eqref{lalarito2}, we can choose
$t=t(\varepsilon)$ such that
$\textrm{meas}\:\{|u_n|>t\}<\varepsilon/3$ and
$\textrm{meas}\:\{|u_m|>t\}<\varepsilon/3$. On the other hand, from
Lemma \ref{lem:cris} applied to $u_n$ and
\eqref{lalarito}--\eqref{lalarito2}, we obtain
$$\begin{array}{ll}
\displaystyle\int_\Om|\nabla T_t(u_n)|^{p(x)} dx &\displaystyle\leq
C\Big((t+\|\varphi\|_\infty+1)(\|f\|_1+1)\\
&\displaystyle+
\int_\Om\left(|\nabla\varphi|^{p(x)}+j(x)^{p'(x)}\right) dx+1\Big),
\end{array}$$ for all  $t>0$, where $C$ is a constant
depending only on $\alpha$, $\gamma$ and $p(\cdot)$. Therefore, we
can assume, by Sobolev embedding, that $\left\{
T_{t}(u_n)\right\}_n$ is a Cauchy sequence in $\Lqx$, for all $1\leq
q(\cdot)\ll p^*(\cdot)$. Consequently, there exists a measurable
function $u$ such that
$$T_{t}(u_n)\longrightarrow T_{t}(u),\quad \textrm{in }\Lqx \ \textrm{and a.e. in } \Omega.$$
Thus,
$$\textrm{meas}\: \{|T_t(u_n)-T_t(u_m)|>s\} \leq \int_\Om \left(
\frac{\left| T_{t}(u_n)-T_{t}(u_m) \right|}{s} \right)^{q(x)}\ dx <
\frac{\epsilon}{3}$$ for all $n,m\geq n_0(s,\epsilon)$. Finally,
from \eqref{measu1}, we obtain
$$\textrm{meas}\: \{|u_n-u_m|>s\}< \epsilon, \quad \textrm{for all}\ n,m\geq n_0(s,\epsilon),$$
\textit{i.e.}, $\{u_n\}_n$ is a Cauchy sequence in measure. The
assertion follows.

\medskip

\noindent The proof of (ii) is entirely similar to the corresponding
one in Proposition~5.3 of \cite{SU06}. We omit the details.
\end{proof}

At this point, we prove Theorem \ref{thm2} using Proposition
\ref{prop:key}.


\medskip
\noindent \emph{Proof of Theorem} {\rm \ref{thm2}}. Let
$\varphi\in\convex$ and define
$\varphi_n:=\varphi+(\psi_n-\varphi)^+$. Note that
$\varphi_n\in\convexn$ and that $\varphi_n$ converges strongly to
$\varphi$ in $\Wpx$, due to \eqref{convergence:fn}. Taking
$\varphi_n$ as a test function in $\entropycondn$, we obtain
$$\int_\Om a(x,\nabla u_n)\cdot \nabla T_t(u_n-\varphi_n)\ dx\leq
\int_\Om f_n(x) T_t(u_n-\varphi_n)\ dx.$$ Next, we pass to the limit
in the previous inequality.

Concerning the right-hand side, the convergence is obvious since
$f_n$ converges to $f$, strongly in $L^1(\Om)$, and
$T_t(u_n-\varphi_n)$ converges to $T_t(u-\varphi)$, weakly--$\ast$
in $L^\infty$ and a.e. in $\Om$. To deal with the left-hand side we
write it as
\begin{equation}\label{lhs}
 \int_{\{|u_n-\varphi_n|\leq t\}} a(x,\nabla u_n)\cdot \nabla u_n \
dx- \int_{\{|u_n-\varphi_n|\leq t\}} a(x,\nabla u_n)\cdot \nabla
\varphi_n \ dx
\end{equation}
and note that $\{|u_n-\varphi_n|\leq t\}$ is a subset of
$\{|u_n|\leq t+C(\|\varphi\|_\infty+1)\}$, where $C$ is a constant
that does not depend on $n$ (see \eqref{lalarito}). Hence, taking
$s=t+C(\|\varphi\|_\infty+1)$, we rewrite the second integral in
\eqref{lhs} as
$$\int_{\{|u_n-\varphi_n|\leq t\}} a(x,\nabla T_s(u_n))\cdot \nabla
\varphi_n\ dx.$$ Since $a(x,\nabla T_s(u_n))$ is uniformly bounded
in $(L^{p'(\cdot)}(\Om))^N$ (by assumption \eqref{assumption2} and
Lemma \ref{lem:cris}), it converges weakly to $a(x,\nabla T_s(u))$
in $(L^{p'(\cdot)}(\Om))^N$, due to Proposition
\ref{prop:key}(\textit{ii}). Therefore the last integral converges
to
$$\int_{\{|u-\varphi|\leq t\}} a(x,\nabla u))\cdot \nabla \varphi \
dx.$$ The first integral in \eqref{lhs} is nonnegative, by
\eqref{assumption1}, and it converges a.e. by Proposition
\ref{prop:key}. It follows from Fatou's lemma that
$$\int_{\{|u-\varphi|\leq t\}} a(x,\nabla u)\cdot \nabla u \ dx\leq
\liminf_{n\rightarrow +\infty}\int_{\{|u_n-\varphi_n|\leq t\}}
a(x,\nabla u_n)\cdot \nabla u_n \ dx.$$

Gathering results, we obtain
$$\int_\Om a(x,\nabla u)\cdot \nabla T_t(u-\varphi) \ dx\leq \int_\Om
f\: T_t(u-\varphi)\ dx,$$ \textit{i.e.}, $u$ is an entropy solution
of $\entropycond$.\qed

\medskip

Finally, we prove Theorem \ref{thm1}, as an application of Theorem
\ref{thm2}.

\medskip


\noindent \emph{Proof of Theorem} {\rm \ref{thm1}}. Let us consider
the sequence of approximated obstacle problems $\entropycondfn$,
where $\{f_n\}_n$ is a sequence of bounded functions strongly
converging to $f$ in $L^1(\Om)$. It is straightforward, from
classical results (see \cite{L69,KinStam80}), to prove the existence
of a unique solution $u_n\in \Wpx$ of the obstacle problem
$\entropycondfn$. Noting that a weak energy solution is also an
entropy solution, we may apply Theorem \ref{thm2} to obtain that
$u_n$ converges to a measurable function $u$ which is an entropy
solution of the limit obstacle problem $\entropycond$. Now, the
regularity stated in the theorem follows immediately from Corollary
\ref{cor1}.

Finally, we prove the uniqueness. Let $u$ and $v$ be entropy
solutions of $\entropycond$. Since $\psi^+ \in\Wpx \cap L^\infty
(\Om)$ and $\psi\leq \|\psi^+\|_\infty$, $T_h u$ and $T_h v$ belong
to the convex set ${\mathcal K}_\psi$ for $h>0$ large enough. Now,
we proceed as in the proof of Theorem~4.1 in \cite{SU06}. We write
the variational inequality $\entropycond$ corresponding to the
solution $u$, with $T_hv$ as test function, and to the solution $v$,
with $T_hu$ as test function. Upon addition, we get
$$\int_{\{|u-T_hv|\leq t\}}a(x,\nabla u)\cdot \nabla (u-T_hv)\ dx
 +\int_{\{|v-T_hu|\leq t\}}a(x,\nabla v)\cdot \nabla (v-T_hu)\ dx$$
$$\leq \int_\Om f\: \Big(T_t(u-T_hv)+T_t(v-T_hu)\Big)\ dx.$$
We let $h$ go to infinity in this inequality. By Proposition
\ref{lem:u_nbounds}(\textit{i}), it is easy to prove that the
right-hand side tends to zero. Moreover, using assumptions
\eqref{assumption1}--\eqref{assumption2}, H\"older's inequality, and
Proposition~\ref{lem:u_nbounds}(\textit{ii}) to study the left-hand
side, we obtain
$$\int_{\{|u-v|\leq t\}} \left( a(x,\nabla u)-a(x,\nabla v)
\right)\cdot \nabla (u-v)\ dx\leq 0, \quad \mbox{for all} \ t>0.$$
By assumption \eqref{assumption3}, we conclude that $\nabla u=\nabla
v$, a.e. in $\Om$, and hence, from Poincar\'e's inequality, it
follows that $u=v$, a.e. in $\Om$.\qed

\section{Lewy--Stampacchia inequalities and stability of the coincidence set}
\label{section4}

The aim of this section is to prove the Lewy--Stampacchia
inequalities and the resulting properties stated in Section
\ref{section1.1}.

In order to prove Theorem \ref{thm3}, we consider a sequence of
approximated obstacle problems for which the abstract theory
developed in \cite{M,AtP} applies. Once we have the
Lewy--Stampacchia inequalities for the approximated problems, we may
pass to the limit using
the following proposition.
\begin{prop}\label{L1convergence}
Assume $p(\cdot)-1\ll q_1(\cdot)$. Under the assumptions of
Proposition {\rm \ref{prop:key}} the following assertions hold:
\begin{itemize}
\item[{\rm(}i{\rm)}] $a(x,\nabla u_n)$ converges to $a(x,\nabla u)$, strongly in
$L^1(\Om)$.

\item[{\rm(}ii{\rm)}] $a(x,\nabla u)\in \Lqx$, for some $1\leq q(\cdot)$.

\item[{\rm(}iii{\rm)}] $u$ and $\nabla u$ satisfy \eqref{L^q_bound} and
\eqref{grad_bound}.
\end{itemize}
\end{prop}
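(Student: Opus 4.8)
The plan is to prove the three assertions in the order (iii), then (i), then (ii), since the uniform bounds from (iii) feed the convergence in (i), and (ii) is a byproduct of the argument for (i). For (iii), I would observe that the sequence $\{u_n\}_n$ satisfies the hypotheses of Corollary \ref{cor1} with test functions $\varphi_n:=\psi^+ +(\psi_n-\psi^+)^+$, which are uniformly bounded in $L^\infty(\Om)$ and in the modular of $\Wpx$ thanks to the convergence \eqref{ffn} (exactly as in the estimates \eqref{lalarito}--\eqref{lalarito2} in the proof of Proposition \ref{prop:key}). Hence the constants in \eqref{L^q_bound} and \eqref{grad_bound} can be taken independent of $n$, and by Proposition \ref{prop:key}(ii) and Fatou's lemma applied along a subsequence converging a.e., the limit $u$ and its weak gradient $\nabla u$ inherit the same bounds. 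In particular $|\nabla u_n|^{q(\cdot)}$ is bounded in $L^1(\Om)$, uniformly in $n$, for every $0\ll q(\cdot)\ll q_1(\cdot)$.

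For (i), the strategy is a Vitali-type argument: we already know from Proposition \ref{prop:key}(ii) that $\nabla u_n\to\nabla u$ in measure, so by continuity of $a(x,\cdot)$ we get $a(x,\nabla u_n)\to a(x,\nabla u)$ in measure; it then remains to show that $\{a(x,\nabla u_n)\}_n$ is equi-integrable in $L^1(\Om)$. This is where the hypothesis $p(\cdot)-1\ll q_1(\cdot)$ enters in an essential way. By the growth assumption \eqref{assumption2}, $|a(x,\nabla u_n)|\leq \gamma(j(x)+|\nabla u_n|^{p(x)-1})$, and $j\in L^{p'(\cdot)}(\Om)\subset L^1(\Om)$ is a fixed equi-integrable term, so everything reduces to the equi-integrability of $\{|\nabla u_n|^{p(\cdot)-1}\}_n$. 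Choosing an exponent $q(\cdot)$ with $p(\cdot)-1\ll q(\cdot)\ll q_1(\cdot)$, one has, on any measurable set $E$, by Young's (or Hölder's) inequality with variable exponents,
$$\int_E |\nabla u_n|^{p(x)-1}\,dx \leq C\int_E\bigl(|\nabla u_n|^{q(x)}\chi_{\{|\nabla u_n|>1\}}+1\bigr)^{(p(x)-1)/q(x)}\,dx,$$
and since the exponent $(p(x)-1)/q(x)$ is bounded away from $1$ from below — wait, rather bounded \emph{above} by a constant $<1$ — a further Hölder inequality with the conjugate exponent $q(\cdot)/(p(\cdot)-1)\gg1$ splits this into $\bigl(\int_E(1+|\nabla u_n|^{q(x)})\,dx\bigr)^{\theta}|E|^{1-\theta}$ for some $\theta<1$; the first factor is bounded uniformly in $n$ by (iii), and the second tends to zero with $|E|$, uniformly in $n$. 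This gives equi-integrability, and Vitali's theorem yields the strong $L^1$ convergence.

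For (ii), it suffices to note that the very estimate just used shows $\int_\Om |a(x,\nabla u)|^{q(x)/(p(x)-1)}\,dx<\infty$ — indeed $|a(x,\nabla u)|^{q(x)/(p(x)-1)}\leq C\bigl(j(x)^{q(x)/(p(x)-1)}+|\nabla u|^{q(x)}\bigr)$, the first term being integrable since $q(\cdot)/(p(\cdot)-1)\leq p'(\cdot)$ after possibly shrinking $q(\cdot)$, and the second by (iii) — so $a(x,\nabla u)\in L^{\tilde q(\cdot)}(\Om)$ with $\tilde q(\cdot):=q(\cdot)/(p(\cdot)-1)\gg1$. The main obstacle is the equi-integrability step in (i): one must manage the variable-exponent Hölder and Young inequalities carefully to produce a uniform-in-$n$ modulus of absolute continuity, and it is precisely here that the structural hypothesis $p(\cdot)-1\ll q_1(\cdot)$ is needed so that the relevant composite exponent stays uniformly below $1$, giving a genuine power $|E|^{1-\theta}$ with $\theta<1$. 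The rest is bookkeeping with the constants from Corollary \ref{cor1}.
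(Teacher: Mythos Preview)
Your proposal is essentially correct. Note that the paper does not actually give a proof of this proposition: it simply states that the argument is analogous to Proposition~5.5 in \cite{SU06} and omits all details. Your sketch is a faithful reconstruction of what that omitted argument must be: uniform bounds from Corollary~\ref{cor1} (using the test functions $\varphi_n=\psi^++(\psi_n-\psi^+)^+$ exactly as in \eqref{lalarito}--\eqref{lalarito2}) give (iii); convergence in measure of $\nabla u_n$ from Proposition~\ref{prop:key}(ii) plus equi-integrability of $|\nabla u_n|^{p(\cdot)-1}$ yields (i) via Vitali; and (ii) follows from the growth bound \eqref{assumption2} together with (iii). The hypothesis $p(\cdot)-1\ll q_1(\cdot)$ is used precisely where you locate it, to produce an exponent $q(\cdot)$ with $p(\cdot)-1\ll q(\cdot)\ll q_1(\cdot)$ so that $|\nabla u_n|^{p(\cdot)-1}$ lies in $L^{r(\cdot)}(\Om)$ uniformly with $r(\cdot)=q(\cdot)/(p(\cdot)-1)\gg 1$.

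One cosmetic remark: your displayed inequality for $\int_E|\nabla u_n|^{p(x)-1}\,dx$ is written awkwardly. It is cleaner to apply the variable-exponent H\"older inequality directly: with $r(\cdot)=q(\cdot)/(p(\cdot)-1)$,
\[
\int_E |\nabla u_n|^{p(x)-1}\,dx \leq 2\,\bigl\||\nabla u_n|^{p(\cdot)-1}\bigr\|_{r(\cdot)}\,\|\chi_E\|_{r'(\cdot)},
\]
where the first factor is bounded uniformly in $n$ since its modular equals $\int_\Om|\nabla u_n|^{q(x)}\,dx$, and the second tends to zero with $|E|$ because $r'(\cdot)$ is bounded. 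This replaces your two-step Young-then-H\"older manoeuvre and avoids the self-correction in your text.
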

\begin{proof}
We omit the proof, since it is analogous to the proof of
Proposition~5.5 in \cite{SU06}.
\end{proof}

\begin{rem}
{\rm As pointed out in \cite{SU06}, assumption $p(\cdot)-1\ll
q_1(\cdot)$, which is obviously satisfied for $p$ constant, is
equivalent to the condition
\begin{equation}
\frac{Np^\prime (\cdot)}{N-p(\cdot)}  \gg \overline{p^\prime} .
\label{xs}
\end{equation}
The analysis of the behaviour of the function on the left-hand side
of this inequality leads to the following conclusions:

\medskip

\begin{itemize}

\item[(i)] if $\overline{p} < \sqrt{N}$ then \eqref{xs} is satisfied for
any function $p(\cdot)$ such that
$$\frac{1}{\underline{p}} - \frac{1}{\overline{p}} < \frac{\overline{p} -1 }{N};$$

\medskip

\item[(ii)] if $\underline{p} \leq \sqrt{N} \leq \overline{p}$ then \eqref{xs} is satisfied for
any function $p(\cdot)$ such that
$$\underline{p} > \frac{N}{2\sqrt{N}-1} ;$$

\medskip

\item[(iii)] if $\underline{p} > \sqrt{N}$ then \eqref{xs} is satisfied for
any function $p(\cdot)$.

\end{itemize}

\medskip

\noindent The condition in case (i) only holds if $\underline{p}$ is
\textit{close} to $\overline{p}$, so it forces a modest variation in
the field of values of $p(\cdot)$. }
\label{new_rem}
\end{rem}

\medskip

Now, we are able to prove Theorem \ref{thm3}.

\medskip

\noindent \emph{Proof of Theorem} {\rm \ref{thm3}}. Consider a
sequence $\{f_n\}_n$ of $L^\infty (\Om)$ functions such that $f_n
\rightarrow f$ in $L^1(\Om)$. Let $u_n\in\Wpx$ be the unique weak
energy solution of the obstacle problem
$$ u_n \in {\mathcal K}_\psi \ : \ \left\langle {\mathcal A} u_n - f_n, v-u_n \right\rangle
\geq 0, \quad \forall v \in {\mathcal K}_\psi .$$ Since $V:=\Wpx$ is
a reflexive Banach space and ${\mathcal A}:V \rightarrow V^\prime$
is strictly $T$-monotone, it follows from the abstract theory
developed in \cite{M} that
$$f_n \leq {\mathcal A} u_n \leq f_n + ({\mathcal A}\psi-f_n)^+ \quad \textrm{in} \ V^\prime.$$
In particular, these inequalities hold in the sense of
distributions.

Let $0 \leq \varphi \in \mathcal{D} (\Om)$; then
$$
\int_\Om f_n \varphi\ dx \leq \int_\Om a(x, \nabla u_n) \cdot \nabla
\varphi\ dx \leq \int_\Om \left[f_n + ({\mathcal
A}\psi-f_n)^+\right] \varphi\ dx.
$$
We can pass to the limit in this expression using the facts that
$f_n \rightarrow f$ in $L^1(\Om)$ and $a(x, \nabla u_n) \rightarrow
a(x, \nabla u)$ in $L^1(\Om)$ (see
Proposition \ref{L1convergence}(\textit{i})),
and obtain
$$f \leq {\mathcal A}u \leq f+({\mathcal A}\psi-f)^+ \quad \mbox{in}\ \mathcal{D}^\prime (\Om).$$
Finally, since $f$ and $f+({\mathcal A}\psi-f)^+$ are $L^1(\Om)$
functions, we conclude that also ${\mathcal A}u \in L^1(\Om)$ and
\eqref{LeviStam_ineq} follows.\qed

In order to prove Theorem \ref{thm4} we need two preliminary lemmas.

\begin{lem}\label{lem2}
Let $w_i$ be measurable functions such that $T_t(w_i) \in \Wpx$, for
all $t>0$, $a(x, \nabla w_i) \in \left[ L^1(\Om) \right]^N$, and
${\mathcal A} w_i \in L^1 (\Om)$, for $i=1,2$. Then
\begin{equation}
{\mathcal A}w_1 = {\mathcal A}w_2 \quad \mbox{a.e. in} \ \: \{
w_1=w_2 \} . \label{lem}
\end{equation}
\end{lem}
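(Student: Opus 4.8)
The plan is to prove \eqref{lem} by testing the entropy-type inequalities satisfied by $w_1$ and $w_2$ against each other, following the circle of ideas used to establish uniqueness in the proof of Theorem~\ref{thm1}. Since ${\mathcal A}w_i \in L^1(\Om)$ and $a(x,\nabla w_i) \in [L^1(\Om)]^N$, the identity ${\mathcal A}w_i = f_i$ (with $f_i := {\mathcal A}w_i$) should be interpreted distributionally and, thanks to the $L^1$ membership, as an equality of integrable functions; moreover $a(x,\nabla w_i)\cdot\nabla T_t(w_1-w_2)$ makes sense because $\nabla T_t(w_1-w_2)$ is a bounded-support, $L^{p(\cdot)}$ vector field and $a(x,\nabla w_i)$, being dominated by $j + |\nabla w_i|^{p(\cdot)-1}$ via \eqref{assumption2}, lies in $L^{p'(\cdot)}$ on the set $\{|w_i| \le s\}$. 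So the first step is to justify, for each fixed $t>0$, the integration-by-parts formula
$$\int_\Om a(x,\nabla w_i)\cdot\nabla T_t(w_1-w_2)\ dx = \int_\Om ({\mathcal A}w_i)\, T_t(w_1-w_2)\ dx,$$
using a cut-off $T_s(w_i)$ with $s\to\infty$ together with the integrability hypotheses and the fact that $\nabla T_t(w_1-w_2)$ vanishes outside a set where $w_1-w_2$ is bounded.

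Next I would subtract these two identities. Writing $\Phi := T_t(w_1-w_2)$, one gets
$$\int_\Om \big(a(x,\nabla w_1)-a(x,\nabla w_2)\big)\cdot\nabla\Phi\ dx = \int_\Om ({\mathcal A}w_1-{\mathcal A}w_2)\,\Phi\ dx.$$
Now $\nabla\Phi = \nabla(w_1-w_2)\,\chi_{\{|w_1-w_2|<t\}} = (\nabla w_1-\nabla w_2)\,\chi_{\{|w_1-w_2|<t\}}$ (in the sense of the weak gradients recalled in the excerpt), so the left-hand integrand equals $\big(a(x,\nabla w_1)-a(x,\nabla w_2)\big)\cdot(\nabla w_1-\nabla w_2)$ on $\{|w_1-w_2|<t\}$, which is $\ge 0$ by \eqref{assumption3}, and in particular is $0$ precisely where $\nabla w_1 = \nabla w_2$. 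The right-hand side is bounded in absolute value by $t\int_\Om |{\mathcal A}w_1-{\mathcal A}w_2|\ dx$, but the sharper observation is that $\Phi \to 0$ a.e. on $\{w_1=w_2\}$ and $|\Phi|\le t$ everywhere, so by dominated convergence letting $t\downarrow 0$ forces
$$\int_{\{|w_1-w_2|<t\}} \big(a(x,\nabla w_1)-a(x,\nabla w_2)\big)\cdot(\nabla w_1-\nabla w_2)\ dx \longrightarrow 0$$
as $t\downarrow 0$; but by monotone convergence this integral increases to $\int_{\{w_1=w_2\}}0 + \int_{\{w_1\neq w_2, \text{ limit set}\}}(\cdots)$, and matching the two limits shows the nonnegative integrand must vanish a.e. on the relevant set. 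A cleaner route is: dividing by $t$ and using that on $\{w_1 = w_2\}$ one has $\nabla w_1 = \nabla w_2$ a.e. (a standard fact about weak gradients, which can itself be recovered by this argument), reduces the whole question to showing $f_1 = f_2$ there.

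The crux is therefore the final step: to deduce ${\mathcal A}w_1 = {\mathcal A}w_2$ a.e. on $\{w_1=w_2\}$ from the inequality/identity above. The key point is that on $E:=\{w_1=w_2\}$ we have $\nabla(w_1-w_2) = 0$ a.e. (apply the weak-gradient characterization: $\nabla T_t(w_1-w_2) = 0$ a.e. on $\{w_1 = w_2\} \cap \{|w_1-w_2|<t\} = E$, and $E \subset \{|w_1-w_2|<t\}$). Hence $\nabla\Phi = 0$ a.e. on $E$ and the left-hand side of the subtracted identity, restricted to $E$, contributes nothing, while off $E$ it is nonnegative. On the other hand $\Phi = 0$ a.e. on $E$, so $\int_\Om ({\mathcal A}w_1-{\mathcal A}w_2)\Phi\,dx = \int_{\Om\setminus E}({\mathcal A}w_1-{\mathcal A}w_2)\Phi\,dx$. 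To conclude, one localizes: replace $\Phi$ by $T_t(w_1-w_2)\,\zeta$ for $\zeta\in\mathcal D(\Om)$, $0\le\zeta\le 1$; then as $t\downarrow 0$, $\frac1t\int_\Om a(x,\nabla w_i)\cdot\nabla(T_t(w_1-w_2)\zeta)\,dx$ and $\frac1t\int_\Om({\mathcal A}w_i)T_t(w_1-w_2)\zeta\,dx$ can be analyzed, and the difference identity together with \eqref{assumption3} and the a.e. equality of gradients on $E$ pins down $({\mathcal A}w_1-{\mathcal A}w_2)\chi_E = 0$ a.e. I expect the main obstacle to be precisely this passage to the limit and the attendant justification of the integration-by-parts formula for merely $L^1$ data and $L^1$ operator values — i.e. controlling the terms supported near $\{|w_i|=s\}$ as $s\to\infty$ using the integrability of $a(x,\nabla w_i)$ and ${\mathcal A}w_i$, rather than any algebraic subtlety, which is comparatively routine once the test-function manipulations are licensed.
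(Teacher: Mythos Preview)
Your approach has a genuine gap: the test functions you build, $T_t(w_1-w_2)$ and its localized variants $T_t(w_1-w_2)\zeta$, all vanish identically on $E=\{w_1=w_2\}$. Consequently, every integration-by-parts identity you write down carries \emph{no} information about the pointwise values of ${\mathcal A}w_1-{\mathcal A}w_2$ on $E$; both sides of
\[
\int_\Om \big(a(x,\nabla w_1)-a(x,\nabla w_2)\big)\cdot\nabla\Phi\,dx
=\int_\Om({\mathcal A}w_1-{\mathcal A}w_2)\,\Phi\,dx
\]
restrict to zero over $E$ for trivial reasons (left side because $\nabla\Phi=0$ there, right side because $\Phi=0$ there), and what remains is an identity over $\Om\setminus E$ which does not constrain the target quantity. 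The monotonicity assumption \eqref{assumption3} and the equality of weak gradients on $E$ are already fully exploited once you note $a(x,\nabla w_1)=a(x,\nabla w_2)$ a.e.\ on $E$; the remaining step --- passing from ``the vector field $\pmb{\xi}:=a(x,\nabla w_1)-a(x,\nabla w_2)$ vanishes on $E$'' to ``its divergence vanishes a.e.\ on $E$'' --- is exactly what is at stake, and no amount of testing against functions that are zero on $E$ will settle it.

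The paper proceeds by a different, purely real-variable route: it works in the space $\pmb{L}^1_\nabla(\Om)=\{\pmb{\xi}\in[L^1(\Om)]^N:\ {\rm div}\,\pmb{\xi}\in L^1(\Om)\}$, uses the density of $[C^1(\overline{\Om})]^N$ in this space for the graph norm, and invokes the Stampacchia-type lemma (in the spirit of \cite[Lemmata A3--A4]{KinStam80}) asserting that ${\rm div}\,\pmb{\xi}=0$ a.e.\ on $\{\pmb{\xi}=\pmb{0}\}$. Applying this to $\pmb{\xi}=a(x,\nabla w_1)-a(x,\nabla w_2)$ and combining with $\nabla w_1=\nabla w_2$ a.e.\ on $\{w_1=w_2\}$ gives the result directly. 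If you wish to repair your argument, this density-plus-level-set lemma is the missing ingredient; the entropy/uniqueness machinery is not the right tool here.
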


\begin{proof}
Let
$$\pmb{L}^1_{\nabla} (\Om) = \left\{ \pmb{\xi} \in \left[ L^1(\Om) \right]^N \ : \
{\rm div}\ \pmb{\xi} \in L^1(\Om)\right\} .$$ Since $\left[
C^1(\overline{\Om}) \right]^N$ is dense in $\pmb{L}^1_{\nabla}
(\Om)$ for the graph norm, it follows from the arguments in
\textit{Lemmata} A3 and A4 of \cite[pages 52--53]{KinStam80} that
the following property holds in $\pmb{L}^1_{\nabla} (\Om)$:
$${\rm div}\ \pmb{\xi} = 0  \quad \mbox{a.e. in} \ \: \{ \pmb{\xi} = \pmb{0} \} . $$
Due to the assumptions, $a(x, \nabla w_1) - a(x, \nabla w_2) \in
\pmb{L}^1_{\nabla} (\Om)$, so we have
\begin{equation}
{\mathcal A}w_1 = {\mathcal A}w_2  \quad \mbox{a.e. in} \ \: \left\{
a(x, \nabla w_1) = a(x, \nabla w_2) \right\} . \label{help}
\end{equation}
Finally, it is standard that
$$\nabla T_t (w_1) = \nabla T_t (w_2) \quad \mbox{a.e. in} \ \: \left\{  w_1 = w_2 \right\} ,$$
for any $t>0$, so the weak gradients $\nabla w_1$ and $\nabla w_2$
coincide in $\left\{  w_1 = w_2 \right\}$ and the conclusion follows
from \eqref{help}.
\end{proof}

The other lemma requires a definition of the coincidence set for the
obstacle problem, which poses a difficulty in face of the available
regularity for the solution and the obstacle. Indeed, if $u$ and
$\psi$ are continuous functions, the coincidence set is defined as
the closed subset of $\Omega$
$$
\left\{ x \in \Omega \ : \ u(x) = \psi (x) \right\} = \left( u-\psi
\right)^{-1} \left( \{ 0 \} \right)\ ,
$$
and this definition is unambiguous. But, in general, the entropy
solution is not necessarily continuous, and we are not making that
assumption for the obstacle either. So we need to interpret the
coincidence set in a different and more elaborate sense.

We first define the \textit{non--coincidence set} $\{ u > \psi\}$.
Since $\psi$ is bounded above (\textit{cf.} Remark \ref{rem:thm1}),
we can take $s> \sup_\Om \psi$. The function $T_s(u)$ belongs to
$\Wpx$, by the definition of entropy solution. Then
$$\{ u > \psi\} := \left\{ x \in \Omega \ : \ \left( T_s(u)-\psi
\right) (x) > 0 \ \mbox{in the sense of} \
W^{1,p(\cdot)}(\Om)\right\}\ .$$ Given $w \in W^{1,p(\cdot)}(\Om)$,
we say that $w(x)>0$ in the sense of $W^{1,p(\cdot)}(\Om)$ if there
exists a neighborhood of $x$, $N_x \subset \Omega$, and a
nonnegative function $\zeta \in W^{1,\infty} (N_x)$, such that
$\zeta (x)
>0$ and $w \geq \zeta$ a.e. in $N_x$. The definition is clearly
independent of the choice of $s$ and it turns out that $\{ u >
\psi\}$ is necessarily an open subset of $\Omega$. We then define
the \textit{coincidence set} as
$$
\{ u = \psi\} := \Omega \setminus \{ u > \psi\}\ .
$$

\begin{lem}\label{lem1}
Assume \eqref{assumption1}--\eqref{assumptionf} and $p(\cdot)-1\ll
q_1(\cdot)$. The entropy solution of the obstacle problem
$\entropycond$ solves
\begin{equation}
{\mathcal A}u =f \ , \quad \mbox{a.e. in } \{ u > \psi\}\ .
\label{eq1}
\end{equation}
\end{lem}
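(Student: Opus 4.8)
The plan is to prove that $u$ satisfies ${\mathcal A}u = f$ in the interior of the non-coincidence set by a localization argument: near any point of $\{u>\psi\}$ we can perturb $u$ by small compactly supported variations in both directions without leaving ${\mathcal K}_\psi$, so the variational inequality $\entropycond$ becomes an equality there, which in the sense of distributions gives ${\mathcal A}u=f$ locally. Concretely, fix $x_0\in\{u>\psi\}$; by the definition of the non-coincidence set just introduced there is a neighborhood $N_{x_0}$ and a function $\zeta\in W^{1,\infty}(N_{x_0})$ with $\zeta(x_0)>0$ and $T_s(u)\ge\psi+\zeta$ a.e.\ in $N_{x_0}$ (for $s>\sup_\Om\psi$). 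Shrinking $N_{x_0}$, we may assume $\zeta\ge 2\delta>0$ on a smaller ball $B\Subset N_{x_0}$ and $T_s(u)=u$ on the set where the test functions will differ from $u$ (since there $u\le s$).

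Next I would take a test function of the form $\varphi=\max\{\psi,\,T_s(u)+\eta\,\theta\}$ where $\theta\in\mathcal D(B)$ with $|\theta|\le 1$ and $\eta$ small enough that $|\eta\theta|<\delta$; then $T_s(u)+\eta\theta\ge\psi+\delta>\psi$ on $B$, so $\varphi=T_s(u)+\eta\theta$ on $B$ and $\varphi=T_s(u)$ outside $B$, whence $\varphi\in{\mathcal K}_\psi\cap L^\infty(\Om)$ is admissible. With this choice $\varphi-u=\eta\theta$ (on the region where the truncation $T_t$ is not active, i.e.\ for $t$ fixed and $\eta$ small), so $\entropycond$ reads
\begin{equation*}
\eta\int_\Om a(x,\nabla u)\cdot\nabla\theta\ dx\geq \eta\int_\Om f\,\theta\ dx .
\end{equation*}
Running this for both signs of $\eta$ and all $\theta\in\mathcal D(B)$ yields $\int_B a(x,\nabla u)\cdot\nabla\theta\,dx=\int_B f\,\theta\,dx$ for all such $\theta$, i.e.\ ${\mathcal A}u=f$ in $\mathcal D'(B)$. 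Here I would use that $a(x,\nabla u)\in[L^1(\Om)]^N$ and ${\mathcal A}u\in L^1(\Om)$ — both guaranteed under the hypothesis $p(\cdot)-1\ll q_1(\cdot)$ by Proposition \ref{L1convergence} and Theorem \ref{thm3} — so the distributional identity is an identity of $L^1$ functions, giving ${\mathcal A}u=f$ a.e.\ in $B$. Covering $\{u>\psi\}$ by countably many such balls $B$ gives \eqref{eq1}.

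The main obstacle is the bookkeeping around the truncation $T_t$ in the entropy formulation: $\entropycond$ involves $T_t(\varphi-u)$, not $\varphi-u$, so to convert the inequality into the clean statement above I must choose, for the fixed localized test function $\varphi$ supported perturbation of size $O(\eta)$, a level $t$ and smallness of $\eta$ so that $|\varphi-u|\le t$ on the support of the perturbation — on $B$ we have $|\varphi-u|=|\eta\theta|<\delta$, so any $t\geq\delta$ works there, and off $B$ we have $\varphi=u$ (in the truncated sense $T_s(u)$, but the perturbation only lives where $u\leq s$), so $T_t(\varphi-u)=\varphi-u$ identically for the relevant integrands. One should also check that the weak gradient manipulations are legitimate, namely that $\nabla T_t(\varphi-u)=\nabla(\varphi-u)=\eta\nabla\theta$ on $B$ and vanishes elsewhere, which follows from the properties of the weak gradient recalled in Section \ref{section1.1} together with $T_s(u)=u$ where it matters. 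A secondary technical point — harmless but worth a line — is justifying that the set $\{u>\psi\}$ is genuinely open and that the covering argument is countable, both of which were already arranged in the definition preceding the lemma.
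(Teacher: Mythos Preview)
Your strategy is the same as the paper's---perturb $T_s(u)$ by a small compactly supported function, use both signs, and read off the equation in $\mathcal D'$---but there is a real gap in the execution. You repeatedly assume that $T_s(u)=u$ on the relevant set: you write ``on $B$ we have $|\varphi-u|=|\eta\theta|$'' and ``off $B$ we have $\varphi=u$''. Neither is justified. The entropy solution $u$ is in general unbounded, so $\{|u|>s\}$ is nonempty no matter how large $s$ is, and on that set your test function equals $T_s(u)\neq u$. Hence $\varphi-u=T_s(u)-u+\eta\theta$ carries a contribution from $\{|u|>s\}$ on \emph{both} sides of the entropy inequality, and the clean identity $T_t(\varphi-u)=\eta\theta$ you need is false there. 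Nothing in the definition of the non--coincidence set lets you shrink $B$ so that $u\le s$ on $B$; the definition controls $T_s(u)-\psi$, not $u$ itself.

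The paper's proof meets exactly this difficulty and resolves it with two moves that you omit. First, on the left one writes $\nabla T_t(T_h(u)\pm\varepsilon\varphi-u)=-\nabla u\,\chi_{\{|u|>h\}}\pm\varepsilon\nabla\varphi$ on the active set, and the extra term $-\int a(x,\nabla u)\cdot\nabla u\,\chi_{\{|u|>h\}}$ is $\le 0$ by the coercivity \eqref{assumption1}, so it can be discarded while keeping the inequality in the required direction. Second, one lets $h\to\infty$: this is where the hypothesis $p(\cdot)-1\ll q_1(\cdot)$ is actually used, via Proposition~\ref{L1convergence}, to guarantee $a(x,\nabla u)\in L^1(\Om)$ and hence allow dominated convergence on the left, while on the right $T_t(T_h(u)-u\pm\varepsilon\varphi)\to\pm\varepsilon\varphi$ a.e.\ (for fixed $t>\varepsilon\|\varphi\|_\infty$) against $f\in L^1(\Om)$. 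Only after this limit does one obtain $\pm\varepsilon\int a(x,\nabla u)\cdot\nabla\varphi\ge\pm\varepsilon\int f\varphi$. In your write--up you invoke Proposition~\ref{L1convergence} and Theorem~\ref{thm3} only \emph{after} claiming the distributional identity, to upgrade it to an a.e.\ statement; but the integrability is already needed \emph{to derive} the identity, through the passage $s\to\infty$ that your argument skips.
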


\begin{proof}
To simplify, let us denote $\Lambda = \{ u > \psi \}$, which is an
open subset of $\Omega$. Let $\varphi \in \mathcal{D} (\Lambda)$.
Let $h > \sup_\Om\psi$ and choose $\varepsilon>0$ small enough such
that
$$v=T_h(u) \pm \varepsilon \varphi \ \in \ \convex.$$
Taking $v$ as a test function in $\entropycond$, we obtain
$$\int_\Om a(x,\nabla u) \cdot \nabla T_t \left( T_h(u) \pm
\varepsilon \varphi-u \right)\ dx \geq \int_\Om f \: T_t \left(
T_h(u) \pm \varepsilon \varphi-u \right)\ dx.$$ From
\eqref{assumption1}, it follows that
$$\pm \varepsilon \int_{ \left\{ \, |T_h(u) \pm \varepsilon \varphi-u|\leq t \right\}}
a(x,\nabla u) \cdot \nabla \varphi\ dx \geq \int_\Om f \: T_t \left(
T_h(u) \pm \varepsilon \varphi-u \right)\ dx.$$ Choosing
$t>\varepsilon \| \varphi \|_\infty$ and letting $h \rightarrow
\infty$ (using Proposition \ref{L1convergence} (i)), we obtain
$$\pm \varepsilon \int_{ \Lambda}
a(x,\nabla u) \cdot \nabla \varphi\ dx \geq \pm \varepsilon \int_{
\Lambda} f \varphi\ dx,$$ and, hence, we conclude that
$${\mathcal A}u= - \, \textrm{div} \ a(x,\nabla u) = f \qquad \textrm{in} \quad \mathcal{D}^\prime (\Lambda)$$
and the result follows.
\end{proof}

We prove Theorem \ref{thm4} as a consequence of \textit{Lemmata}
\ref{lem2} and \ref{lem1}.

\medskip
\noindent \emph{Proof of Theorem} {\rm \ref{thm4}}. By the previous
lemma, we have ${\mathcal A}u=f$, a.e. in $\{ u > \psi \}$. The
result follows from the fact that ${\mathcal A}u={\mathcal A}\psi$,
a.e. in $\{ u=\psi\}$, which is a consequence of Lemma \ref{lem2},
since ${\mathcal A}u\in L^1(\Om)$ by Theorem \ref{thm3}.\qed

Using Theorems \ref{thm2} and \ref{thm4} we prove the convergence of
a sequence of coincidence sets to the coincidence set of the limit.

\medskip
\noindent \emph{Proof of Theorem} {\rm \ref{thm5}}. Let $u_n$ and
$u$ be the entropy solutions of the obstacle problems
$\entropycondn$ and $\entropycond$, respectively. By Theorem
\ref{thm2}, $u_n$ converges to $u$ in measure, and hence, a.e. in
$\Om$. Moreover, by Theorem \ref{thm4}, and denoting
$\chi_{_n}=\chi_{\{u_n=\psi_n\}}$, $u_n$ satisfies
\begin{equation}\label{pepaflores}
{\mathcal A}u_n-({\mathcal A}\psi_n-f_n)\chi_{_n}=f_n,\quad
\textrm{a.e. in }\Om, \textrm{ for all }n.
\end{equation}
Since $0\leq \chi_{_n}\leq 1$, there exists a subsequence (still
denoted by $\chi_{_n}$) and a function $\chi\in L^\infty(\Om)$, such
that
$$\chi_{_n}\rightharpoonup \chi \quad \textrm{weakly}-\ast \ \textrm{ in
}L^\infty(\Om).$$ Hence, since ${\mathcal A}\psi_n\rightarrow
{\mathcal A}\psi$ and $f_n\rightarrow f$, strongly in $L^1(\Om)$,
taking the limit in \eqref{pepaflores} we obtain
$${\mathcal A}u-({\mathcal A}\psi-f)\chi=f, \quad \textrm{a.e. in }\Om.$$

On the other hand, by Theorem \ref{thm4}, $u$ also satisfies the
previous identity with $\chi$ replaced by $\chi_{\{u=\psi\}}$.
Therefore, using ${\mathcal A}\psi\neq f,$ a.e. in $\Om$, the whole
sequence $\chi_{_n}$ converges to the characteristic function
$\chi_{\{u=\psi\}}$ and satisfies \eqref{limit_coincidence}. The
theorem is proved.\qed

Finally, we prove Theorem \ref{thm6} using again Proposition
\ref{prop:key} and the Lewy--Stampacchia inequalities.

\medskip
\noindent \emph{Proof of Theorem} {\rm \ref{thm6}}. First, we claim
that
\begin{equation}\label{graph}
\int_\Om ({\mathcal A}u_1-{\mathcal A}u_2)\: \varphi\ dx \geq 0,
\quad \forall \ \varphi\in\sigma(u_1(x)-u_2(x)).
\end{equation}
Here $\sigma$ de\-notes the maximal monotone graph associated to the
sign function (\textit{i.e.}, $\sigma=\partial r$, $r(t)=|t|$).

Indeed, let $\{f_i^n\}_n$ be a sequence of bounded functions
strongly converging in $L^1(\Om)$ to $f_i$ ($i=1,2$), and let $u_i^n
\in \Wpx$ be the corresponding weak energy solutions of
$\entropycondfin$. Let $(\sigma_\varepsilon)_{\varepsilon>0}$ be a
sequence of smooth functions satisfying $\sigma_\varepsilon(0)=0$,
$|\sigma_\varepsilon(t)|\leq 1$ and $\sigma'_\varepsilon(t)\geq 0$,
for all $t\in\R$, such that $\sigma_\varepsilon(t)\rightarrow
\textrm{sign}\, (t)$ as $\varepsilon\downarrow 0$. Integration by
parts and the use of assumption \eqref{assumption3} yield the
inequality
$$\int_\Om ({\mathcal A}u_1^n-{\mathcal A}u_2^n)\: \sigma_\varepsilon(u_1^n-u_2^n)\ dx$$
\begin{equation}
= \int_\Om \left( a(x,\nabla u_1^n)-a(x,\nabla u_2^n) \right) \cdot
\nabla(u_1^n-u_2^n)\: \sigma'_\varepsilon(u_1^n-u_2^n)\ dx \geq 0.
\label{limi}
\end{equation}
We now pass to the limit as $n \rightarrow \infty$. To start with,
we have (for a subsequence, relabeled if need be)
$${\mathcal A}u_1^n-{\mathcal A}u_2^n \rightharpoonup {\mathcal A}u_1-{\mathcal A}u_2, \quad \mbox{weakly in }
L^1(\Om).$$ This follows from Dunford-Pettis Theorem (the hypothesis
of which are satisfied due to the Lewy--Stampacchia inequalities),
and the fact that the convergence holds in the sense of
distributions since, by Proposition \ref{L1convergence}(\textit{i}),
$$a(x,\nabla u_1^n)-a(x,\nabla
u_2^n) \longrightarrow a(x,\nabla u_1)-a(x,\nabla u_2), \quad
\mbox{in } L^1(\Om).$$ On the other hand, by Proposition
\ref{prop:key}(\textit{i}),
$$\sigma_\varepsilon(u_1^n-u_2^n) \longrightarrow \sigma_\varepsilon(u_1-u_2),
\quad \mbox{a.e in } \Om .$$ Fix an arbitrary $\delta >0$. Again
from the Lewy--Stampacchia inequalities, we can find $\nu
>0$ such that, for all $A \subset \Om$,
\begin{equation}
\textrm{meas}(A)<\nu \Longrightarrow \int_A |{\mathcal
A}u_1^n-{\mathcal A}u_2^n|\ dx< \frac{\, \delta}{\, 4} \: , \quad
\textrm{for all } \: n. \label{equi}
\end{equation}
By Egorov's Theorem, there exists a measurable subset $\omega
\subset \Omega$ such that
\begin{equation}
\textrm{meas}\left(\Om \setminus \omega \right) < \nu
\label{egorov1}
\end{equation}
and
\begin{equation}
\sigma_\varepsilon(u_1^n-u_2^n) \longrightarrow
\sigma_\varepsilon(u_1-u_2), \quad \mbox{uniformly in } \omega.
\label{egorov2}
\end{equation}
To lighten the notation, we put $F^n:={\mathcal A}u_1^n-{\mathcal
A}u_2^n$ and $G_\varepsilon^{n}:= \sigma_\varepsilon(u_1^n-u_2^n)-
\sigma_\varepsilon(u_1-u_2)$. Then,
\begin{eqnarray}
\left| \int_\Om F^n (x) \: G_\varepsilon^{n} (x)   \ dx \right| &
\leq & \left| \int_{\Om \setminus \omega} F^n (x) \:
G_\varepsilon^{n} (x)   \ dx \right| +
\left| \int_\omega F^n (x) \: G_\varepsilon^{n} (x)   \ dx \right| \nonumber\\
 & \leq & 2 \int_{\Om \setminus \omega} \left| F^n (x)  \right| \ dx  +  \int_\omega
\left| F^n (x)\right| \: \left| G_\varepsilon^{n} (x) \right|  \ dx \nonumber\\
 & \leq & 2 \: \frac{\, \delta}{\, 4} + \kappa \: \frac{\, \delta}{\, 2\kappa} \nonumber\\
 & = & \delta \, , \label{bd}
\end{eqnarray}
for all $n \geq n_0$, using \eqref{egorov1} and \eqref{equi} to
bound the first term and \eqref{egorov2} to bound the second. Here
$\kappa > 0$ is a constant (which exists due to the
Lewy--Stampacchia inequalities) such that
$$\int_\omega \left| F^n (x)\right| \ dx \leq \int_\Omega \left| {\mathcal A}u_1^n-{\mathcal A}u_2^n \right|
\ dx \leq \kappa, \quad \forall \: n.$$ Since $\delta >0$ is
arbitrary, we conclude from \eqref{bd} that
$$\int_\Om \left( {\mathcal A}u_1^n-{\mathcal A}u_2^n \right) \: \left[ \sigma_\varepsilon(u_1^n-u_2^n)-
\sigma_\varepsilon(u_1-u_2) \right]  \ dx \longrightarrow 0$$ so we
can pass to the limit in \eqref{limi} to obtain
$$\int_\Om ({\mathcal A}u_1-{\mathcal A}u_2) \: \sigma_\varepsilon(u_1-u_2)\ dx\geq 0.$$
Finally, letting $\varepsilon\downarrow 0$, we obtain \eqref{graph}
with $\varphi=\textrm{sign}\,(u_1-u_2)$. Since, by Lemma \ref{lem2},
$$({\mathcal A}u_1-{\mathcal A}u_2)\: \varphi = ({\mathcal A}u_1-{\mathcal A}u_2) \: \textrm{sign}\,(u_1-u_2),\quad
\textrm{a.e. }x\in\Om,$$ for all $\varphi\in\sigma(u_1-u_2)$, the
claim follows.

To conclude the proof, take $\varphi\in\sigma(u_1-u_2)$, defined by
$$\varphi:=\left\{
\begin{array}{cll}
-1&\textrm{in}&\{u_1<u_2\}\cup\{\xi_1<\xi_2\}\\
0&\textrm{on}&\{u_1=u_2\}\cap\{\xi_1=\xi_2\}\\
1&\textrm{in}&\{u_1>u_2\}\cup\{\xi_1>\xi_2\}.
\end{array} \right.$$
Multiplying
$$\xi_1-\xi_2=(f_1-f_2)-({\mathcal A}u_1-{\mathcal A}u_2)$$
by $\varphi$, integrating in $\Om$, and using \eqref{graph}, we
obtain
$$\int_\Om|\xi_1-\xi_2|\ dx = \int_\Om (\xi_1-\xi_2) \: \varphi \ dx
\leq \int_\Om(f_1-f_2)\: \varphi\ dx\leq \int_\Om|f_1-f_2|\ dx,$$
proving \eqref{L1}. Finally, by Theorem \ref{thm4}, we have
$\xi_i=(f_i-{\mathcal A}\psi)\chi_{\{u_i=\psi\}}$, for $i=1,2$.
Therefore
$$|\chi_{\{u_1=\psi\}}-\chi_{\{u_2=\psi\}}|\leq \frac{1}{\lambda}
|\xi_1-\xi_2|, \quad \textrm{a.e. in }D,$$ due to assumption
\eqref{assumptionX}. The theorem follows by integrating over
$D$.\qed

\bigskip

\noindent{\bf Acknowledgments.} The research of J.F. Rodrigues and
J.M. Urbano was partially supported by CMUC/FCT and Project
POCI/MAT/57546/2004.

The research of M. Sanch\'on was partially supported by CMUC/FCT and
MCYT grant MTM2005--07660--C02.


\begin{thebibliography}{99}

\bibitem{AM01} {Acerbi, E., Mingione, G.}: Regularity results for a class
of functionals with nonstandard growth. Arch. Ration. Mech. Anal.
\textbf{156} (2001), 121--140.

\bibitem{AA06}{Aharouch, L., Akdim, Y.}: Strongly nonlinear elliptic
unilateral problems without sign condition and $L^1$ data. J. Convex
Anal. {\bf 13} (2006), no. 1, 135--149.

\bibitem{Al} {Alkhutov, Yu.A.}: The Harnack inequality and the H\"older property of
solutions of nonlinear elliptic equations with a nonstandard growth
condition. Differential Equations \textbf{33} (1997), no. 12,
1653--1663.

\bibitem{ABFOT03} {Alvino, A., Boccardo, L., Ferone, V.,
Orsina, L., Trombetti, G.}: Existence results for nonlinear elliptic
equations with degenerate coercivity, Ann. Mat. Pura Appl.
\textbf{182} (2003), 53--79.

\bibitem{AR} Antontsev, S., Rodrigues, J.F.: On stationary thermorheological viscous
flows. Ann. Univ. Ferrara Sez. VII Sci. Mat. \textbf{52} (2006), no.
1, 19--36.

\bibitem{AS} Antontsev, S., Shmarev, S.: Elliptic equations with anisotropic
nonlinearity and nonstandard growth conditions. In: Handbook of
Differential Equations, Stationary Partial Differential Equations,
vol. 3, pp.1--100, Elsevier, 2006.

\bibitem{AtP} Attouch, H., Picard, C.: Probl\`emes variationnels et
th\'eorie du potentiel non lin\'eaire. Ann. Fac. Sci. Toulouse Math.
(5) \textbf{1} (1979), 89--136.

\bibitem{BBGGPV95} B\'enilan, Ph., Boccardo, L., Gallou\"et, T.,
Gariepy, R., Pierre, M., V\'azquez, J.L.: An $L\sp 1-$theory of
existence and uniqueness of solutions of nonlinear elliptic
equations. Ann. Scuola Norm. Sup. Pisa Cl. Sci. (4) {\bf 22} (1995),
241--273.

\bibitem{BC} Boccardo, L., Cirmi, G.R.: Existence and uniqueness of solution
of unilateral problems with $L\sp 1$ data. J. Convex Anal. {\bf 6}
(1999), 195--206.

\bibitem{BG} Boccardo, L., Gallou\"et, T.: Probl\`emes
unilat\'eraux avec donn\'ees dans $L\sp 1$. C. R. Acad. Sci. Paris
S\'er. I Math. \textbf{311} (1990), no. 10, 617--619.

\bibitem{BP} Br\'ezis, H., Ponce, A.: Reduced measures for obstacle
problems. Adv. Differential Equations  \textbf{10}  (2005),  no. 11,
1201--1234.

\bibitem{BS} Br\'ezis, H., Strauss, W.: Semi-linear second-order elliptic
equations in $L\sp{1}$. J. Math. Soc. Japan \textbf{25} (1973),
565--590.

\bibitem{CLR} {Chen, Y., Levine, S., Rao, M.:} Variable exponent, linear growth
functionals in image restoration. SIAM J. Appl. Math. {\bf 66}
(2006), no. 4, 1383--1406.

\bibitem{Cirmi} {Cirmi, G.R.:} Convergence of the solutions of nonlinear obstacle
problems with $L\sp 1$-data.  Asymptot. Anal.  \textbf{24}  (2000),
no. 3-4, 233--253.

\bibitem{DADM} Dall'Aglio, P., Dal Maso, G.: Some properties of the solutions of
obstacle problems with measure data. Ricerche Mat.  \textbf{48}
(1999), suppl., 99--116.

\bibitem{D} {Diening, L.}: Riesz potential and Sobolev embeddings on generalized
Lebes\-gue and Sobolev spaces $L\sp {p(\cdot)}$ and $W\sp
{k,p(\cdot)}$. Math. Nachr.  \textbf{268}  (2004), 31--43.

\bibitem{DHN} {Diening, L., H\"{a}st\"{o}, P., Nekvinda, A.}: Open problems in variable
exponent Lebesgue and Sobolev spaces. In: FSDONA04 Proceedings,
Drabek and Rakosnik (eds.), pp. 38--58, Milovy, Czech Republic,
2004.

\bibitem{ER00} {Edmunds, D., R\'akosn\'{\i}k, J.}: Sobolev embeddings with
variable exponent. Studia Math. {\bf 143} (2000), 267--293.


\bibitem{FZ} {Fan, X., Zhao, D.}:
A class of De Giorgi type and H\"{o}lder continuity. Nonlinear Anal.
{\bf 36} (1999), 295--318.

\bibitem{HHKV} {P. Harjulehto, P. H\"{a}st\"{o}, M. Koskenoja and S. Varonen}, \textit{The Dirichlet energy integral and variable exponent
Sobolev spaces with zero boundary values}, Potential Anal.
\textbf{25} (2006), 205-–222.

\bibitem{KinStam80} Kinderlehrer, D., Stampacchia, G.: An introduction to
variational inequalities and their applications. Pure and Applied
Mathematics \textbf{88}, Academic Press, New York--London, 1980.

\bibitem{KR} {Kov\'{a}\v{c}ik, O., R\'{a}kosn\'{\i}k, J.}:
On spaces $L^{p(x)}$ and $W^{1,p(x)}$. Czechoslovak Math. J.
\textbf{41} (1991), 592--618.

\bibitem{L1} Leone, C.: On a class of nonlinear obstacle problems with
measure data. Comm. Partial Differential Equations  \textbf{25}
(2000), no. 11-12, 2259--2286.

\bibitem{L2} Leone, C.: Stability results for
obstacle problems with measure data.  Ann. Inst. H. Poincar\'e Anal.
Non Lin\'eaire  \textbf{22}  (2005),  no. 6, 679--704.

\bibitem{L69} Lions, J.L.: Quelques m\'ethodes de r\'esolution des probl\`emes
aux limites non lin\'eaires. Dunod, Gauthier-Villars, Paris, 1969.

\bibitem{M} Mosco, U.: Implicit variational problems and quasi-variational
inequalities. Lecture Notes in Mathematics \textbf{543}, pp.
83--156, Spinger, 1976.

\bibitem{P} Palmeri, M.C.: Entropy subsolutions and supersolutions for
nonlinear elliptic equations in $L\sp 1$.  Ricerche Mat. {\bf 53}
(2004), 183--212.

\bibitem{Ro87} Rodrigues, J.F.: Obstacle problems in mathematical
physics. North-Holland Mathematics Studies \textbf{134},
North-Holland, Amsterdam, 1987.

\bibitem{Ro05} Rodrigues, J.F.: Stability remarks to the obstacle problem
for $p-$Lapla\-cian type equations. Calc. Var. Partial Differential
Equations {\bf 23} (2005), 51--65.

\bibitem{Ruzicka00} {R\r{u}\v{z}i\v{c}ka, M.}: Electrorheological fluids:
modeling and mathematical theory. Lecture Notes in Mathematics
\textbf{1748}, Springer-Verlag, Berlin, 2000.

\bibitem{SU06} Sanch\'on, M., Urbano, J.M.: Entropy solutions for the $p(x)-$Laplace
equation. Trans. Amer. Math. Soc., to appear.

\bibitem{Zh} {Zhikov, V.}: Averaging of functionals of the calculus of variations and
elasticity theory. Izv. Akad. Nauk SSSR Ser. Mat. \textbf{50}
(1986), no. 4, 675--710, 877.

\bibitem{Zh2} {Zhikov, V.}: Meyer-type estimates for solving the nonlinear Stokes
system. Differential Equations \textbf{33} (1997), no. 1, 108--115.

\end{thebibliography}
\end{document}